\documentclass[11pt]{article}
\usepackage[a4paper,margin=1in]{geometry}
\usepackage[T1]{fontenc}
\usepackage[utf8]{inputenc}
\usepackage{lmodern}
\usepackage{amsmath,amssymb,amsthm,mathtools,bm}
\usepackage{microtype}
\usepackage{enumitem}
\usepackage{booktabs}
\usepackage{xcolor}
\usepackage{hyperref}
\usepackage{aliascnt}
\usepackage[nameinlink,capitalize]{cleveref}
\hypersetup{colorlinks=true,linkcolor=blue!55!black,citecolor=blue!55!black,urlcolor=blue!55!black}

\numberwithin{equation}{section}
\newtheorem{theorem}{Theorem}[section]

\newaliascnt{proposition}{theorem}
\newtheorem{proposition}[proposition]{Proposition}
\aliascntresetthe{proposition}

\newaliascnt{lemma}{theorem}
\newtheorem{lemma}[lemma]{Lemma}
\aliascntresetthe{lemma}

\newaliascnt{corollary}{theorem}
\newtheorem{corollary}[corollary]{Corollary}
\aliascntresetthe{corollary}

\theoremstyle{definition}
\newaliascnt{remark}{theorem}
\newtheorem{remark}[remark]{Remark}
\aliascntresetthe{remark}

\newaliascnt{definition}{theorem}
\newtheorem{definition}[definition]{Definition}
\aliascntresetthe{definition}

\crefname{theorem}{Theorem}{Theorems}
\crefname{proposition}{Proposition}{Propositions}
\crefname{lemma}{Lemma}{Lemmas}
\crefname{corollary}{Corollary}{Corollaries}
\crefname{remark}{Remark}{Remarks}
\crefname{definition}{Definition}{Definitions}

\newcommand{\R}{\mathbb R}
\newcommand{\dd}{\,d}
\newcommand{\Gam}{\Gamma}
\newcommand{\lapfive}{\Delta_5}

\newcommand{\norm}[1]{\left\lVert #1\right\rVert}

\title{\textbf{Critical Compression in Axisymmetric Swirl}}
\author{Rishad Shahmurov\\
Cellular Products Research and Development, Roswell, Georgia, USA\\
\texttt{rshahmurov@crimson.ua.edu}}
\date{September 2026}

\begin{document}
\maketitle

\begin{abstract}
We study two scale-critical inward-compression mechanisms in smooth axisymmetric Navier--Stokes flow with swirl.  For the weighted swirl family $Y_d=r^d u^\theta$, $-1\le d<1$, an exact $L^p$ balance yields a signed radial-compression action.  If $p\ge3/(1-d)$, finite $L^p$ mass and a bounded signed action imply continuation.  On the critical diagonal $Y_q=r^{1-3/q}u^\theta$, $q\ge3/2$, the integrating factor is universal, $\exp(3\int\overline U_q)$; $H^1$ data automatically supply the critical mass for $2\le q\le3$, while bounded circulation extends this to every finite $q>3$.  Independently, a sliding Petrovskii displacement criterion reaches the endpoint coefficient $2$ and admits a quantified iterated-log correction, while a pulse train shows terminal-only control is insufficient.  For profiles with the corresponding weighted moments, the instantaneous Hodge response to $\partial_z(F^2)$ is non-amplifying for power-weighted compressive $F^2$-work throughout $1\le\alpha\le5$.  Thus a finite-time singularity must escape a continuum of critical signed-compression detectors and recurrently cross the Petrovskii wall.
\end{abstract}

\noindent\textbf{Keywords.} Axisymmetric Navier--Stokes equations; swirl; regularity criteria; radial compression; Petrovskii criterion.

\noindent\textbf{2020 Mathematics Subject Classification.} 35Q30, 35B44, 35B65, 76D05.

\section{Introduction}

We consider the incompressible Navier--Stokes equations
\begin{equation}\label{eq:NS}
 \partial_t u+(u\cdot\nabla)u+\nabla p=\nu\Delta u,
 \qquad \nabla\cdot u=0,
 \qquad x\in\R^3,
\end{equation}
with $\nu>0$ and smooth axisymmetric data in $H^2(\R^3)$.  In cylindrical coordinates,
\[
 u=u^r(r,z,t)e_r+u^\theta(r,z,t)e_\theta+u^z(r,z,t)e_z.
\]
The no-swirl class is globally regular \cite{UkhovskiiYudovich1968,Ladyzhenskaya1968}, whereas arbitrary swirl remains open.  Existing restrictions include scale-critical one-component criteria \cite{ChaeLee2002}, blowup-rate estimates \cite{ChenStrainTsaiYau2008,ChenStrainTsaiYau2009}, circulation criteria \cite{LeiZhang2017,Wei2016}, angular-velocity and vorticity criteria \cite{ChenFangZhang2017,Zhang2016}, weighted component criteria \cite{RenclawowiczZajaczkowski2019}, and quantitative critical-space bounds \cite{OzanskiPalasek2023}.  Zhang recently proved regularity under the one-sided partial type-I condition $u^r\gtrsim-(T-t)^{-1/2}$, assuming bounded initial circulation \cite{Zhang2026}.

Write
\begin{equation}\label{eq:variables}
 \Gamma=ru^\theta,
 \qquad
 F=\frac{u^\theta}{r},
 \qquad
 G=\frac{\omega^\theta}{r},
 \qquad
 U=\frac{u^r}{r},
\end{equation}
and suppress the harmless angular factor $2\pi$, so $\dd\mu_3=r\,\dd r\,\dd z$.  The first part of the paper concerns the one-parameter family
\begin{equation}\label{eq:Yd-intro}
 Y_d=r^d u^\theta,
 \qquad -1\le d<1.
\end{equation}
The endpoints are $Y_{-1}=F$ and, formally as $d\uparrow1$, $Y_1=\Gamma$.  For $-1<d<1$ and $p>1$ we prove the exact balance
\begin{align}\label{eq:intro-Ydp}
 \frac1p\frac{\dd}{\dd t}\int |Y_d|^p\,\dd\mu_3
 &+\nu\frac{4(p-1)}{p^2}\int\bigl|\nabla |Y_d|^{p/2}\bigr|^2\,\dd\mu_3
 +\nu(1-d^2)\int\frac{|Y_d|^p}{r^2}\,\dd\mu_3 \\
 & =-(1-d)\int U|Y_d|^p\,\dd\mu_3.
\end{align}
At $d=-1$ the Hardy-potential term is replaced by the positive axis trace already present in the $F$ equation.  Define the active-strain average
\begin{equation}\label{eq:intro-Ubar-dp}
 \overline U_{d,p}(t)
 =\frac{\int U|Y_d|^p\,\dd\mu_3}{\int|Y_d|^p\,\dd\mu_3},
\end{equation}
with value zero when the denominator vanishes.  The exact integrating factor in \eqref{eq:intro-Ydp} shows that if
\begin{equation}\label{eq:intro-action-dp}
 \sup_{t_0<t<T}\int_{t_0}^t-\overline U_{d,p}(s)\,\dd s<\infty,
\end{equation}
then $|Y_d|^{p/2}\in L_t^\infty L_x^2\cap L_t^2\dot H_x^1$ and hence
\[
 Y_d\in L_t^{4p/3}L_x^{2p}.
\]
Combining this with the weighted angular-velocity criteria of Chen--Fang--Zhang \cite{ChenFangZhang2017} gives continuation whenever
\begin{equation}\label{eq:intro-dp-threshold}
 p\ge\frac{3}{1-d}.
\end{equation}
The weighted energy itself is closely related to earlier weighted-swirl estimates, including the critical family used by Renc\l awowicz--Zaj\k aczkowski \cite{RenclawowiczZajaczkowski2019}.  The point here is different: we retain the \emph{signed} strain average in the exact integrating factor, so radial expansion is allowed to cancel earlier compression rather than being replaced by an unsigned norm.

The scale-critical diagonal in \eqref{eq:intro-dp-threshold} has a particularly simple form.  For $3/2\le q<\infty$ set
\begin{equation}\label{eq:Yq-intro}
 d_q=1-\frac3q,
 \qquad
 Y_q=r^{1-3/q}u^\theta,
 \qquad
 \overline U_q=\frac{\int U|Y_q|^q\,\dd\mu_3}{\int|Y_q|^q\,\dd\mu_3}.
\end{equation}
Then $\|Y_q\|_q$ is invariant under Navier--Stokes scaling and the integrating factor is independent of $q$:
\begin{equation}\label{eq:intro-universal-factor}
 \exp\left(3\int_{t_0}^t\overline U_q(s)\,\dd s\right).
\end{equation}
For smooth $H^1$ data, $Y_q(t_0)\in L^q$ automatically for $2\le q\le3$; bounded circulation extends this to every finite $q>3$.  In particular, in the bounded-circulation class a hypothetical first singularity must make the signed action diverge for \emph{every} $q\ge2$.  The case $q=3$ is especially elementary: $Y_3=u^\theta$, and its critical mass is already finite by $H^1\hookrightarrow L^6$ and interpolation.  Thus the new family contains a scale-critical action criterion using the physical angular velocity itself.

The second result is geometric.  Let
\begin{equation}\label{eq:mDintro}
 m(t)=\norm{(u^r)_-(t)}_{L^\infty(\R^3)},
 \qquad
 D_\sigma(h)=\int_{\sigma-h}^{\sigma}m(t)\,\dd t,
\end{equation}
and, for $0<h<h_0$,
\begin{equation}\label{eq:ell}
 \ell_{h_0}(h)=\log\log\left(e^e\frac{h_0}{h}\right).
\end{equation}
A moving-boundary heat estimate yields regularity whenever the sliding displacement obeys
\begin{equation}\label{eq:intro-petro}
 D_\sigma(h)\le2\sqrt{\nu h\,\ell_{h_0}(h)}
\end{equation}
uniformly on sufficiently small windows.  The endpoint coefficient $2$ is included.  The more precise integral estimate also permits
\begin{equation}\label{eq:intro-petro-secondary}
 \frac{D_\sigma(h)^2}{4\nu h}
 \le \ell_{h_0}(h)+\beta\log\!\bigl(e+\ell_{h_0}(h)\bigr)+C_0,
 \qquad \beta<\frac12.
\end{equation}
The induced circulation modulus is stronger than Wei's logarithmic regularity threshold \cite{Wei2016}.  A smooth pulse train shows that the same scale of control imposed only on intervals ending at the final time does not prevent arbitrarily thin preterminal layers; this comparison mechanism genuinely requires sliding control.

Finally, the source term in the $G$ equation has a definite instantaneous sign after Hodge recovery.  Freezing $F$ and retaining only $\dot G_{\rm src}=\partial_z(F^2)$, we prove that the corresponding variation of $U$ is non-amplifying for the power-weighted compressive $F^2$ functional throughout the Hardy-nonnegative range $1\le\alpha\le5$.  Consequently, a hypothetical singular endpoint must simultaneously escape a continuum of signed scale-critical compression detectors and the sliding Petrovskii regime, while any positive self-source contribution to compression must involve temporal evolution beyond the instantaneous source leg.

\section{Axisymmetric identities}

The physical incompressibility constraint is
\begin{equation}\label{eq:divphys}
 \partial_r(ru^r)+\partial_z(ru^z)=0.
\end{equation}
The variables in \eqref{eq:variables} satisfy
\begin{align}
 \partial_t\Gam+u^r\partial_r\Gam+u^z\partial_z\Gam
 &=\nu\left(\partial_{rr}-\frac1r\partial_r+\partial_{zz}\right)\Gam,
 \label{eq:Gamma}\\
 \partial_tF+u^r\partial_rF+u^z\partial_zF
 &=\nu\lapfive F-2UF,
 \label{eq:F}\\
 \partial_tG+u^r\partial_rG+u^z\partial_zG
 &=\nu\lapfive G+\partial_z(F^2).
 \label{eq:G}
\end{align}
For the Hodge recovery, let $\phi$ solve
\begin{equation}\label{eq:Hodge}
 -\lapfive\phi=G,
 \qquad
 u^r=-r\partial_z\phi,
 \qquad
 u^z=2\phi+r\partial_r\phi.
\end{equation}
Thus
\begin{equation}\label{eq:U-Hodge}
 U=-\partial_z\phi=-\partial_z(-\lapfive)^{-1}G.
\end{equation}
All formulas below are first proved for smooth axis-compatible functions with sufficient decay; standard truncation and approximation remove the auxiliary decay assumptions.

For $-1\le d\le1$ define
\begin{equation}\label{eq:Yd}
 Y_d=r^d u^\theta.
\end{equation}
A direct calculation from the $u^\theta$ equation gives, for $-1<d<1$,
\begin{equation}\label{eq:Yd-PDE}
 \partial_tY_d+u^r\partial_rY_d+u^z\partial_zY_d
 =\nu\left(
 \partial_{rr}Y_d+\frac{1-2d}{r}\partial_rY_d+\partial_{zz}Y_d
 +\frac{d^2-1}{r^2}Y_d\right)
 -(1-d)UY_d.
\end{equation}
The same formula is valid algebraically at $d=-1$ and $d=1$, but the integration by parts at $d=-1$ retains the nonzero axis trace of $F=Y_{-1}$.

\begin{proposition}\label{prop:Ydp}
Let $p>1$ and $0<t_0<t_1<T$ lie in an interval of smooth existence.
\begin{enumerate}[label=\rm(\alph*)]
\item If $-1<d<1$ and $Y_d(t_0)\in L^p(\R^3)$, then
\begin{align}\label{eq:Ydp-energy}
 \frac1p\frac{\dd}{\dd t}\int |Y_d|^p\,\dd\mu_3
 &+\nu\frac{4(p-1)}{p^2}\int\bigl|\nabla |Y_d|^{p/2}\bigr|^2\,\dd\mu_3
 +\nu(1-d^2)\int\frac{|Y_d|^p}{r^2}\,\dd\mu_3 \\
 &=-(1-d)\int U|Y_d|^p\,\dd\mu_3.
\end{align}
\item If $d=-1$, $Y_{-1}=F$, and $F(t_0)\in L^p(\R^3)$, then
\begin{align}\label{eq:Fp}
 \frac1p\frac{\dd}{\dd t}\int |F|^p\,\dd\mu_3
 &+\nu\frac{4(p-1)}{p^2}\int\bigl|\nabla |F|^{p/2}\bigr|^2\,\dd\mu_3
 +\frac{2\nu}{p}\int_\R |F(0,z,t)|^p\,\dd z \\
 &=-2\int U|F|^p\,\dd\mu_3.
\end{align}
\item If $d=1$, $Y_1=\Gamma$, and $\Gamma(t_0)\in L^p(\R^3)$, then
\begin{equation}\label{eq:Gamma-p}
 \frac1p\frac{\dd}{\dd t}\int |\Gamma|^p\,\dd\mu_3
 +\nu\frac{4(p-1)}{p^2}\int\bigl|\nabla |\Gamma|^{p/2}\bigr|^2\,\dd\mu_3=0.
\end{equation}
\end{enumerate}
\end{proposition}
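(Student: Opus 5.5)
We will multiply the transport–diffusion equation \eqref{eq:Yd-PDE} by $|Y_d|^{p-2}Y_d$ and integrate against $\dd\mu_3=r\,\dd r\,\dd z$. The calculation handles the three parts as follows.

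\textbf{Transport term.} It vanishes. Since $\partial_r(ru^r)+\partial_z(ru^z)=0$ by \eqref{eq:divphys}, we have
\[
 \int (u^r\partial_r+u^z\partial_z)\tfrac1p|Y_d|^p\,r\,\dd r\,\dd z=0,
\]
with no boundary term at $r=0$ because $ru^r\to0$ there.

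\textbf{Strain term.} It gives $-(1-d)\int U|Y_d|^p\,\dd\mu_3$ directly.

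\textbf{Viscous term.} The key observation is that the first-order part has a divergence form with respect to the weight $r$:
\[
 \partial_{rr}+\frac{1-2d}{r}\partial_r=\frac{1}{r^{1-2d}}\partial_r\bigl(r^{1-2d}\partial_r\bigr).
\]
This is not the $\dd\mu_3$ divergence, so I will write
\[
 \partial_{rr}Y+\frac{1-2d}{r}\partial_rY=\frac1r\partial_r(r\partial_rY)-\frac{2d}{r}\partial_rY .
\]
The first piece integrates by parts in $\dd\mu_3$ in the standard way, giving $-(p-1)\int|Y|^{p-2}|\nabla Y|^2\,\dd\mu_3=-\frac{4(p-1)}{p^2}\int|\nabla|Y|^{p/2}|^2\,\dd\mu_3$. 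The $\partial_{zz}$ term is treated the same way.

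For the drift piece I use $|Y|^{p-2}Y\partial_rY=\frac1p\partial_r|Y|^p$, so
\[
 -2d\int\frac1r\partial_rY\,|Y|^{p-2}Y\,r\,\dd r\,\dd z=-\frac{2d}{p}\int\partial_r|Y|^p\,\dd r\,\dd z=\frac{2d}{p}\int_\R|Y(0,z)|^p\,\dd z .
\]
For $-1<d<1$, axis regularity gives $u^\theta=O(r)$, so $Y_d=O(r^{1+d})\to0$ and this trace vanishes.

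The Hardy potential contributes $\nu(d^2-1)\int|Y|^p r^{-2}\,\dd\mu_3$, which after moving to the left side gives the term $\nu(1-d^2)\int|Y_d|^pr^{-2}\,\dd\mu_3$ in \eqref{eq:Ydp-energy}. The integrability of $|Y_d|^p r^{-2}\cdot r$ near the axis follows from $|Y_d|^p r^{-1}=O(r^{p(1+d)-1})$ with $p(1+d)>0$.

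\textbf{Endpoint $d=-1$.} The Hardy term drops out, and $Y=F$ has a nonzero trace $F(0,z)=\partial_ru^\theta(0,z)$. The boundary term above becomes $\frac{2d}{p}\int|F(0,z)|^p\,\dd z=-\frac2p\int|F(0,z)|^p\,\dd z$ on the right. Multiplied by $\nu$ and moved to the left, it is the positive trace term $\frac{2\nu}{p}\int|F(0,z)|^p\,\dd z$, matching \eqref{eq:Fp}.

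\textbf{Endpoint $d=1$.} The strain coefficient $1-d$ and the Hardy coefficient $d^2-1$ both vanish. The drift $-\frac{2}{r}\partial_r$ yields the boundary trace $\frac2p\int|\Gamma(0,z)|^p\,\dd z$, which is zero because $\Gamma=O(r^2)$. This gives \eqref{eq:Gamma-p}.

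\textbf{Justification of the formal computation (the main obstacle).} The main obstacle is making the computation rigorous when $Y_d$ is only assumed to lie in $L^p$ at $t_0$. There are three issues: $|Y|^{p-2}Y$ is not smooth when $1<p<2$; integrability must hold at spatial infinity, especially for $d>0$, where the weight $r^d$ grows; and no boundary terms may be lost at $r=0$.

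I will handle these as follows. First, replace $|Y|^p$ by $(Y^2+\epsilon)^{p/2}-\epsilon^{p/2}$. Second, multiply by a cutoff $\chi_R(r,z)$, and also by $\chi(r/\delta)$ near the axis in the case $d<0$. Then I derive the differential inequality with error terms and let $\epsilon\to0$, $\delta\to0$, $R\to\infty$.

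The cutoff errors at infinity involve $u\,|Y|^p\,\nabla\chi_R$ and $|Y|^{p-1}|\nabla Y|\,|\nabla\chi_R|$. These are controlled by the smooth $H^2$ decay of $u$ on $[t_0,t_1]$ together with a Gronwall bound: $U$ is bounded on compact smooth intervals, so $\int|Y_d|^p$ stays finite. Finally, I integrate in time to obtain the identity and differentiate.
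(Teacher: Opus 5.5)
Your proposal is correct and follows essentially the same route as the paper. You test \eqref{eq:Yd-PDE} against $|Y_d|^{p-2}Y_d$, split off the drift $-\frac{2d}{r}\partial_r$ so that it reduces to the axis trace $\frac{2d}{p}\int_\R|Y_d(0,z)|^p\,\dd z$ (zero for $d>-1$ because $Y_d=O(r^{1+d})$, and the positive trace term at $d=-1$), and justify the computation by an $\varepsilon$-regularization, an axis cutoff, and a Gronwall bound using that $U$ is bounded on compact preterminal intervals, as the paper does; the only cosmetic difference is that you regularize for all $p$ and make the far-field cutoff explicit, whereas the paper treats $p\ge2$ directly and leaves truncation at infinity to its standing approximation remark.
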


\begin{proof}
We give the calculation because the signs and the endpoint $d=-1$ are essential.  For $p\ge2$, multiply \eqref{eq:Yd-PDE} by $|Y_d|^{p-2}Y_d$ and integrate against $r\,\dd r\,\dd z$.  The transport term vanishes by \eqref{eq:divphys}.  Put $Y=Y_d$.  The axial diffusion contributes
\[
 -(p-1)\int |Y|^{p-2}|Y_z|^2\,\dd\mu_3.
\]
For the radial differential part, on $r>0$,
\begin{align*}
 &\int_0^\infty
 \left(Y_{rr}+\frac{1-2d}{r}Y_r\right)|Y|^{p-2}Y\,r\,\dd r\\
 &\quad=-(p-1)\int_0^\infty |Y|^{p-2}|Y_r|^2r\,\dd r
 -2d\int_0^\infty Y_r|Y|^{p-2}Y\,\dd r.
\end{align*}
If $-1<d\le1$, smooth axisymmetry gives $u^\theta=O(r)$ and hence $Y_d=O(r^{1+d})$; the last integral has zero boundary contribution.  The potential $(d^2-1)Y/r^2$ therefore gives precisely the third term on the left of \eqref{eq:Ydp-energy}.  For $d=1$ both this term and the stretching coefficient vanish, yielding \eqref{eq:Gamma-p}.

At $d=-1$, $Y=F$ may have a nonzero finite axis trace.  Now the coefficient of the last radial integral is $2$, and
\[
 2\int_0^\infty F_r|F|^{p-2}F\,\dd r
 =-\frac2p|F(0,z,t)|^p,
\]
which gives the positive trace term in \eqref{eq:Fp} after multiplication by $\nu$ and transfer to the left.

For $1<p<2$, replace $|s|^p/p$ by the even convex regularization
\[
 \Phi_\varepsilon(s)
 =\frac{(s^2+\varepsilon^2)^{p/2}-\varepsilon^p}{p},
 \qquad
 \Psi_\varepsilon=\Phi_\varepsilon',
\]
for which
\[
 \Phi_\varepsilon''(s)
 =(s^2+\varepsilon^2)^{(p-4)/2}
 \bigl(\varepsilon^2+(p-1)s^2\bigr)\ge0.
\]
Test the equation by $\Psi_\varepsilon(Y_d)$ on $\{r>\rho\}$, perform the preceding integrations by parts, and then let $\rho\downarrow0$.  For $d>-1$, $Y_d=O(r^{1+d})$ kills the axis boundary; for $d=-1$ the radial boundary converges to $2\nu\int\Phi_\varepsilon(F(0,z,t))\,\dd z$.  On every compact preterminal interval $U$ is bounded, while
\[
 0\le \Phi_\varepsilon(s)\le\frac{|s|^p}{p},
 \qquad
 0\le s\Psi_\varepsilon(s)\le |s|^p,
\]
and $s\Psi_\varepsilon(s)\le C_p\Phi_\varepsilon(s)$.  Gronwall therefore gives an $\varepsilon$-independent mass bound and the corresponding regularized dissipation bound.  Monotone/dominated convergence recovers the mass, potential, stretching, and axis-trace terms.  For the gradient term, Fatou gives the limiting weighted dissipation and, since $\Phi_\varepsilon''(s)\le C_p|s|^{p-2}$ for $s\ne0$ while $\nabla Y=0$ almost everywhere on $\{Y=0\}$, dominated convergence gives equality.  This proves all three identities for $p>1$.
\end{proof}

\section{Scale-critical signed compression actions}

For $-1\le d<1$ and $p>1$, define
\begin{equation}\label{eq:Zdp-Udp}
 Z_{d,p}(t)=\int |Y_d|^p\,\dd\mu_3,
 \qquad
 \overline U_{d,p}(t)
 =\frac{\int U|Y_d|^p\,\dd\mu_3}{Z_{d,p}(t)},
\end{equation}
with $\overline U_{d,p}=0$ when $Z_{d,p}=0$.

\begin{theorem}\label{thm:two-parameter-action}
Let $u$ be a smooth axisymmetric solution of \eqref{eq:NS} on $[0,T)$ with $H^2(\R^3)$ initial data.  Let
\[
 -1\le d<1,
 \qquad
 p\ge\frac3{1-d}.
\]
When $-1\le d<0$, assume in addition that the initial circulation $\Gamma_0=ru_0^\theta$ belongs to $L^\infty$.
Assume that for some $0<t_0<T$,
\begin{equation}\label{eq:action-dp-hyp}
 Y_d(t_0)\in L^p(\R^3),
 \qquad
 \mathcal C_{d,p}(t_0,T)
 :=\sup_{t_0<t<T}\int_{t_0}^t-\overline U_{d,p}(s)\,\dd s<\infty.
\end{equation}
Then $u$ extends smoothly through $T$.
\end{theorem}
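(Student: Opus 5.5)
The argument has three steps: turn the signed action into a bound on the $L^p$ mass of $Y_d$, convert that into a space-time integrability statement, and then invoke a known continuation criterion.

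\emph{Step 1: integrating factor.} On $(t_0,t)$, \cref{prop:Ydp}(a) (or (b) when $d=-1$) gives, after dropping the nonnegative Hardy and axis-trace terms,
\[
 \frac{\dd}{\dd t}Z_{d,p}+\nu\frac{4(p-1)}{p}\int\bigl|\nabla|Y_d|^{p/2}\bigr|^2\,\dd\mu_3\le -p(1-d)\,\overline U_{d,p}\,Z_{d,p}.
\]
The right side is exactly $-p(1-d)\overline U_{d,p}Z_{d,p}$ by the definition \eqref{eq:Zdp-Udp}; when $Z_{d,p}=0$ it vanishes as well. Multiplying by $E(t)=\exp\bigl(p(1-d)\int_{t_0}^t\overline U_{d,p}\bigr)$ and integrating, we get
\[
 \sup_{t_0<t<T} Z_{d,p}(t)\le e^{p(1-d)\mathcal C_{d,p}}Z_{d,p}(t_0).
\]
For the dissipation, $E(s)^{-1}\le e^{p(1-d)\mathcal C_{d,p}}$ for all $s$, and the integrated identity then also gives $\int_{t_0}^T\|\nabla|Y_d|^{p/2}\|_2^2\,\dd t<\infty$. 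The integrating factor $E$ is only bounded \emph{below} by the hypothesis, but that is the right direction for both bounds. Hence $f=|Y_d|^{p/2}\in L^\infty_tL^2_x\cap L^2_t\dot H^1_x$ on $(t_0,T)$.

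\emph{Step 2: interpolation.} Since $f$ is axisymmetric in $\R^3$, Sobolev embedding and interpolation give $f\in L^4_tL^3_x$ (the pair $2/4+3/3=3/2$). This means $Y_d\in L^{2p}_tL^{3p/2}_x$; more generally $f\in L^a_tL^b_x$ with $2/a+3/b=3/2$. The intro records the instance $f\in L^{8/3}_tL^4_x$, i.e. $Y_d\in L^{4p/3}_tL^{2p}_x$. In every case $Y_d=r^du^\theta\in L^{s}_tL^{\rho}_x$ with $2/s+3/\rho=3/p$.

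\emph{Step 3: continuation.} The quantity $r^du^\theta$ has scaling dimension $-(1-d)$, so the critical relation for a Chen--Fang--Zhang type criterion \cite{ChenFangZhang2017} on $r^du^\theta$ is $2/s+3/\rho=1-d$. We have $3/p\le 1-d$ precisely when $p\ge 3/(1-d)$, so our pair is critical or subcritical. In the strictly subcritical case I would use Hölder in time on the bounded interval to reach a critical pair inside the admissible range. I expect to invoke their weighted angular-velocity criterion, which states: if $r^du^\theta\in L^s_tL^\rho_x$ with $2/s+3/\rho\le1-d$ in the admissible range of $\rho$, the solution is regular up to $T$.

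The hypothesis $\Gamma_0\in L^\infty$ for $d<0$ enters as follows. The maximum principle for \eqref{eq:Gamma} gives $|\Gamma|\le\|\Gamma_0\|_\infty$. Then $|r^d u^\theta|=|r^{d-1}\Gamma|$, and one can trade weights: near the axis $d<0$ behaves worse, and the criterion in \cite{ChenFangZhang2017} for negative weights needs bounded circulation. Away from the axis one can also pass to $d'\ge0$ by interpolating between $Y_d\in L^s_tL^\rho_x$ and $\Gamma\in L^\infty$. The precise version is
\[
 |r^{d'}u^\theta|\le |Y_d|^{\theta}|\Gamma|^{1-\theta}, \qquad d'=\theta d+(1-\theta),
\]
and this interpolation preserves criticality.

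\emph{Main obstacle.} The main difficulty is Step 3: matching exactly the admissible ranges $(s,\rho,d)$ in the cited criterion. This is especially delicate at the endpoint $p=3/(1-d)$, when $\rho$ becomes large or $d$ is near $1$, and in the $d=-1$ case. There I would first try the circulation interpolation above to land on a $d'$ inside the covered range, checking that the criticality relation survives.
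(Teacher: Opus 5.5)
Your proposal is correct and follows the paper's proof essentially step for step. The signed integrating factor bounds $|Y_d|^{p/2}$ in $L^\infty_tL^2_x\cap L^2_t\dot H^1_x$. Gagliardo--Nirenberg then gives $Y_d\in L^{4p/3}_tL^{2p}_x$ with $\tfrac{3}{2p}+\tfrac{3}{2p}=\tfrac3p\le1-d$. Continuation follows from Chen--Fang--Zhang: Theorem~1.1 for $d\ge0$, and for $d<0$ the bounded-circulation interpolation of their Remark~1.3, which is exactly your $d'$-trade. The endpoint you flag as delicate is not: $2p>3/(1-d)$ holds strictly even at $p=3/(1-d)$, and the paper restarts the solution at $t_0$, so the mixed norm is only needed on $(t_0,T)$.
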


\begin{proof}
If $Z_{d,p}(t_0)=0$, then $u^\theta(t_0)\equiv0$, the no-swirl class is preserved, and classical no-swirl regularity applies.  Assume $Z_{d,p}(t_0)>0$.  On each compact interval $[t_0,T']\Subset[t_0,T)$, smoothness makes $U$ bounded, so Proposition~\ref{prop:Ydp} and the preceding regularization propagate the finite $L^p$ mass.

For $-1<d<1$, multiply \eqref{eq:Ydp-energy} by $p$ and set
\[
 B_{d,p}(t)=p(1-d)\int_{t_0}^t\overline U_{d,p}(s)\,\dd s.
\]
The balance becomes
\begin{align}\label{eq:dp-integrating-factor}
 &\frac{\dd}{\dd t}\left(e^{B_{d,p}(t)}Z_{d,p}(t)\right)
 +\frac{4\nu(p-1)}p e^{B_{d,p}(t)}
   \int\bigl|\nabla |Y_d|^{p/2}\bigr|^2\,\dd\mu_3\\
 &\hspace{28mm}
 +p\nu(1-d^2)e^{B_{d,p}(t)}
   \int\frac{|Y_d|^p}{r^2}\,\dd\mu_3=0.
\end{align}
At $d=-1$ the same formula holds with the Hardy-potential term replaced by
\[
 2\nu e^{B_{-1,p}(t)}\int_\R|F(0,z,t)|^p\,\dd z.
\]
The signed-action hypothesis implies
\[
 B_{d,p}(t)\ge-p(1-d)\mathcal C_{d,p}(t_0,T),
\]
so \eqref{eq:dp-integrating-factor} controls both
\[
 |Y_d|^{p/2}\in L^\infty(t_0,T;L^2(\R^3))
 \quad\hbox{and}\quad
 |Y_d|^{p/2}\in L^2(t_0,T;\dot H^1(\R^3)).
\]
Gagliardo--Nirenberg therefore gives
\begin{equation}\label{eq:Yd-mixed}
 Y_d\in L^{4p/3}(t_0,T;L^{2p}(\R^3)).
\end{equation}
For the mixed exponents $P=4p/3$ and $Q=2p$,
\begin{equation}\label{eq:mixed-line-d}
 \frac2P+\frac3Q=\frac3p\le1-d,
 \qquad
 Q=2p>\frac3{1-d}.
\end{equation}
Restart the classical solution at $t_0$.  The estimate \eqref{eq:Yd-mixed} therefore supplies the required mixed norm on the whole shifted interval $(t_0,T)$; no information on $(0,t_0)$ is used.  If $0\le d<1$, Theorem~1.1 of Chen--Fang--Zhang \cite{ChenFangZhang2017} applies directly to $r^d u^\theta=Y_d$.  If $-1\le d<0$, write $a=-d\in(0,1]$; Remark~1.3 of the same paper applies to $u^\theta/r^a=Y_d$ under the bounded-circulation hypothesis.  Thus \eqref{eq:Yd-mixed}--\eqref{eq:mixed-line-d} imply continuation in the whole range $-1\le d<1$.
\end{proof}

\begin{remark}\label{rem:known-weighted-context}
The weighted variables themselves are not new.  Chen--Fang--Zhang formulate regularity criteria for $r^d u^\theta$ and, by interpolation with bounded circulation, for $u^\theta/r^a$ \cite{ChenFangZhang2017}.  Renc\l awowicz--Zaj\k aczkowski derive weighted swirl energies for $u_\alpha=\Gamma/r^\alpha$ and select the scale-balanced relation $\alpha s=3$ \cite{RenclawowiczZajaczkowski2019}.  The role of \cref{thm:two-parameter-action} is to retain the exact signed strain average in the integrating factor.  The criterion is therefore not an unsigned control of $U$ or of a weighted swirl norm: radial expansion can compensate previous compression exactly as permitted by the evolution identity.
\end{remark}

\begin{remark}\label{rem:paper3-dictionary}
The same identity can be read directly in the power-weight coordinates of \cite{ShahmurovHardy2026}.  Since $Y_d=r^{d+1}F$ and $\dd\mu_3=r\,\dd r\,\dd z$,
\[
 \int |Y_d|^p\,\dd\mu_3
 =\int |F|^p r^{\alpha}\,\dd r\,\dd z,
 \qquad
 \alpha=p(d+1)+1.
\]
Consequently
\[
 \frac{\alpha-(2p+1)}p=d-1=-(1-d),
\]
so the stretching coefficient in the power-weight spectrum is exactly the coefficient in \cref{eq:Ydp-energy}.  Under the same conjugation, the radial Hardy and gradient terms combine into the positive potential $1-d^2$.  Thus the present two-parameter action family is the continuation-theoretic form of the power-weight spectrum, rather than an independent weighted identity.
\end{remark}

The critical line admits a particularly transparent parametrization.

\begin{definition}\label{def:critical-family}
For $3/2\le q<\infty$, set
\begin{equation}\label{eq:critical-Yq}
 d_q=1-\frac3q,
 \qquad
 Y_q=r^{1-3/q}u^\theta,
 \qquad
 Z_q=\int|Y_q|^q\,\dd\mu_3,
 \qquad
 \overline U_q=\frac{\int U|Y_q|^q\,\dd\mu_3}{Z_q},
\end{equation}
with $\overline U_q=0$ when $Z_q=0$.
\end{definition}

Under Navier--Stokes scaling $u_\lambda(x,t)=\lambda u(\lambda x,\lambda^2t)$,
\[
 Y_{q,\lambda}(x,t)=\lambda^{3/q}Y_q(\lambda x,\lambda^2t),
\]
so $\|Y_q\|_{L^q}$ is scale invariant.  Moreover,
\[
 q(1-d_q)=3,
\]
which removes $q$ from the integrating factor.

\begin{corollary}\label{cor:critical-family}
Let $u$ be a smooth axisymmetric $H^2$ solution on $[0,T)$ and fix $3/2\le q<\infty$.  When $3/2\le q<3$, assume bounded initial circulation $\Gamma_0\in L^\infty$.  If for some $t_0<T$
\begin{equation}\label{eq:critical-action-hyp}
 Y_q(t_0)\in L^q(\R^3),
 \qquad
 \mathcal A_q(t_0,T)
 :=\sup_{t_0<t<T}\int_{t_0}^t-\overline U_q(s)\,\dd s<\infty,
\end{equation}
then $u$ extends smoothly through $T$.
For $q>3/2$, the exact critical balance is
\begin{align}\label{eq:critical-balance}
 \frac{\dd}{\dd t}Z_q
 &+\frac{4\nu(q-1)}q
   \int\bigl|\nabla |Y_q|^{q/2}\bigr|^2\,\dd\mu_3
 +3\nu\left(2-\frac3q\right)
   \int\frac{|Y_q|^q}{r^2}\,\dd\mu_3\\
 &=-3\overline U_q Z_q.
\end{align}
At $q=3/2$, $Y_q=F$, the third term is replaced by
\[
 2\nu\int_\R|F(0,z,t)|^{3/2}\,\dd z.
\]
In every case the integrating factor is
\begin{equation}\label{eq:universal-critical-factor}
 \exp\left(3\int_{t_0}^t\overline U_q(s)\,\dd s\right).
\end{equation}
\end{corollary}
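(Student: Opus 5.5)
The corollary is the specialization $d=d_q=1-3/q$, $p=q$ of \cref{thm:two-parameter-action} together with \cref{prop:Ydp}, so the plan is to check that the hypotheses match and then read off the constants.

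First, the parameter range. For $3/2\le q<\infty$ we have $d_q=1-3/q\in[-1,1)$, and the threshold $p\ge 3/(1-d)$ becomes $q\ge 3/(3/q)=q$. This holds with equality, so every member of the family lies exactly on the critical line of the theorem. Moreover $d_q<0$ exactly when $q<3$, and this is the range in which the corollary assumes bounded initial circulation. That matches the extra hypothesis the theorem requires for $-1\le d<0$. The hypothesis \eqref{eq:critical-action-hyp} is literally \eqref{eq:action-dp-hyp} with $Z_{d_q,q}=Z_q$, $\overline U_{d_q,q}=\overline U_q$ and $\mathcal A_q=\mathcal C_{d_q,q}$. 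Continuation therefore follows directly from \cref{thm:two-parameter-action}.

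Second, the exact balance. For $q>3/2$ we have $-1<d_q<1$, so I multiply \eqref{eq:Ydp-energy} by $p=q$ and compute the coefficients:
\[
q(1-d_q)=3,\qquad q(1-d_q^2)=q(1-d_q)(1+d_q)=3\Bigl(2-\frac3q\Bigr),\qquad q\cdot\frac{4(q-1)}{q^2}=\frac{4(q-1)}{q}.
\]
These give \eqref{eq:critical-balance}, with the right-hand side rewritten as $-3\int U|Y_q|^q\,\dd\mu_3=-3\overline U_qZ_q$. If $Z_q=0$, the convention $\overline U_q=0$ is harmless, since then $u^\theta\equiv0$.

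At $q=3/2$ we have $d_q=-1$ and $Y_q=F$, so I use \eqref{eq:Fp} with $p=3/2$ instead. Multiplying by $3/2$ turns the trace coefficient $2\nu/p$ into $2\nu$ and the stretching coefficient $2$ into $3$.

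Finally, in every case the balance has the form $Z_q'+(\text{nonnegative terms})=-3\overline U_qZ_q$. This gives $\frac{d}{dt}\bigl(e^{3\int_{t_0}^t\overline U_q}Z_q\bigr)\le0$, with equality up to the dissipative terms, which is \eqref{eq:universal-critical-factor}. This is the $q$-independent form of $B_{d,p}=p(1-d)\int\overline U_{d,p}$.

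The only point needing care is bookkeeping at the endpoints. I must check that the bounded-circulation hypothesis is invoked precisely when $d_q<0$. I must also check that the $q=3/2$ case uses the axis-trace version rather than the Hardy term, whose coefficient $2-3/q$ vanishes there. No new analysis is required.
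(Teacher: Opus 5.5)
Your proposal is correct and follows the paper's proof essentially verbatim. The paper likewise specializes \cref{thm:two-parameter-action} to $d=d_q$, $p=q$, so that the threshold $3/(1-d_q)=q$ is met with equality and bounded circulation is needed exactly when $d_q<0$, then multiplies \cref{prop:Ydp} by $q$ using $q(1-d_q)=3$ and $q(1-d_q^2)=3(2-3/q)$, with the endpoint $q=3/2$ taken from the axis-trace identity \eqref{eq:Fp}; your constant bookkeeping, including the trace coefficient $2\nu$ there, checks out.
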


\begin{proof}
Take $d=d_q$ and $p=q$ in \cref{thm:two-parameter-action}.  Then
$3/(1-d_q)=q$, so the continuation condition is exactly critical.  Multiplying the identity in \cref{prop:Ydp} by $q$ and using
\[
 q(1-d_q)=3,
 \qquad
 q(1-d_q^2)=3(1+d_q)=3\left(2-\frac3q\right)
\]
gives \eqref{eq:critical-balance} and \eqref{eq:universal-critical-factor}.  The endpoint $q=3/2$ is precisely $d_q=-1$, hence follows from \eqref{eq:Fp}.
\end{proof}

\begin{proposition}\label{prop:automatic-critical-mass}
Let $u(t)$ be a smooth axisymmetric $H^1(\R^3)$ velocity field.  Then
\begin{equation}\label{eq:auto-Yq-basic}
 Y_q(t)=r^{1-3/q}u^\theta(t)\in L^q(\R^3)
 \qquad\text{for every }2\le q\le3.
\end{equation}
If in addition $\Gamma(t)=ru^\theta(t)\in L^\infty$, then
\begin{equation}\label{eq:auto-Yq}
 Y_q(t)\in L^q(\R^3)
 \qquad\text{for every }2\le q<\infty.
\end{equation}
Consequently, in the bounded-circulation strong-solution class the critical signed-action criterion \eqref{eq:critical-action-hyp} is automatically available at every finite $q\ge2$.
\end{proposition}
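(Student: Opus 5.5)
Write $\int|Y_q|^q\,\dd\mu_3=\int r^{q-3}|u^\theta|^q\,\dd x$. Since $q-3\le 0$ for $q\le 3$ and $q-3>0$ for $q>3$, the proof splits naturally by the sign of the weight.

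\emph{Case $q=3$.} Here $Y_3=u^\theta$. By Sobolev, $H^1\hookrightarrow L^2\cap L^6$, so $u^\theta\in L^3$ by interpolation.

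\emph{Case $2\le q<3$.} The weight is $r^{q-3}=r^{-(3-q)}$ with $0<3-q\le1$. We interpolate between two endpoints.
\begin{itemize}
\item Hardy endpoint ($q=2$). We have $\int |u^\theta|^2 r^{-1}\,\dd x$. I would control it using $|u^\theta|^2/r\le |u^\theta|^2/r^2+|u^\theta|^2$ (split by $r\le1$ or $r>1$).
\item The term $\int|u^\theta|^2/r^2\,\dd x$ is bounded by $\|\nabla u\|_2^2$. This holds because $|\nabla u|^2\ge |u^\theta|^2/r^2$ pointwise: the $e_\theta$ derivative of $u^\theta e_\theta$ contributes $-u^\theta e_r/r$. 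Alternatively, use the cylindrical Hardy inequality $\int |f|^2/r^2\le C\|\nabla f\|_2^2$ applied componentwise.
\end{itemize}
So $Y_2\in L^2$. For general $q\in(2,3)$, write
\[
r^{q-3}|u^\theta|^q=\Bigl(\frac{|u^\theta|}{r}\Bigr)^{3-q}|u^\theta|^{2q-3},
\]
and apply Hölder with exponents $2/(3-q)$ and $2/(q-1)$. This gives
\[
\int r^{q-3}|u^\theta|^q\le\|u^\theta/r\|_2^{3-q}\,\|u^\theta\|_{L^{2(2q-3)/(q-1)}}^{2q-3}.
\]
The exponent $s=2(2q-3)/(q-1)$ lies in $[2,3]$ for $q\in[2,3]$, so $u^\theta\in L^s$ by $H^1\subset L^2\cap L^6$. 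The Hölder bookkeeping is routine and reproduces both endpoints.

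\emph{Case $q>3$.} Now the weight $r^{q-3}$ grows, and I use $|\Gamma|\le\|\Gamma\|_\infty$:
\[
r^{q-3}|u^\theta|^q=|\Gamma|^{q-3}|u^\theta|^3\le\|\Gamma\|_\infty^{q-3}|u^\theta|^3,
\]
which is integrable by the $q=3$ case.

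\emph{Consequence.} In the bounded-circulation strong class, $\Gamma$ satisfies a maximum principle by \eqref{eq:Gamma}, so $\Gamma(t_0)\in L^\infty$ for all $t_0$. Smooth $H^2$ solutions lie in $H^1$ at each time. Hence $Y_q(t_0)\in L^q$ for every $q\ge2$, and \cref{cor:critical-family} applies; its circulation hypothesis for $q<3$ is also met.

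The main obstacle is the Hardy step $\|u^\theta/r\|_2\lesssim\|\nabla u\|_2$. I would justify it by the pointwise identity $|\nabla(u^\theta e_\theta)|^2=|\partial_ru^\theta|^2+|\partial_zu^\theta|^2+|u^\theta|^2/r^2$, together with the orthogonality of the swirl part of $\nabla u$ from the $(u^r,u^z)$ part in the axisymmetric decomposition. Everything else is Hölder and Sobolev.
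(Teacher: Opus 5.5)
Your proof is correct and follows essentially the paper's route. The ingredients are the same: $\|u^\theta/r\|_{L^2}\le\|\nabla u\|_{L^2}$ from the pointwise identity $|\nabla(u^\theta e_\theta)|^2=|\partial_ru^\theta|^2+|\partial_zu^\theta|^2+|u^\theta|^2/r^2$ together with the kinetic energy for $q=2$; Sobolev interpolation for $q=3$; H\"older for $2<q<3$; and $|Y_q|^q=|u^\theta|^3|\Gamma|^{q-3}$ for $q>3$. For $2<q<3$ you pair $u^\theta/r\in L^2$ with $u^\theta\in L^{2(2q-3)/(q-1)}$, which is an equally valid variant of the paper's pointwise interpolation $|Y_q|=|Y_2|^a|Y_3|^{1-a}$.

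The only blemish is your ``alternative'' cylindrical Hardy inequality $\int|f|^2/r^2\,\dd x\le C\|\nabla f\|_2^2$. It is false for general scalar $f$ on $\R^3$, since it is the critical two-dimensional Hardy inequality in $(x_1,x_2)$. It holds for the Cartesian components here only because they carry angular modes $\pm1$, so drop the aside and rely on the pointwise identity.
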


\begin{proof}
For $q=2$,
\[
 \|Y_2\|_2^2
 =2\pi\int_\R\int_0^\infty |u^\theta(r,z)|^2\,\dd r\,\dd z.
\]
Split at $r=1$.  On $r\ge1$ this is bounded by the kinetic energy.  On $0<r\le1$,
\[
 \int_{r\le1}|u^\theta|^2\,\dd r\,\dd z
 \le\int_{r\le1}\frac{|u^\theta|^2}{r}\,\dd r\,\dd z,
\]
and the right side is, up to $2\pi$, $\|u^\theta/r\|_{L^2(\R^3)}^2$.  Indeed, for the swirl vector $u^\theta e_\theta$,
\[
 |\nabla(u^\theta e_\theta)|^2
 =|\partial_r u^\theta|^2+|\partial_z u^\theta|^2+\frac{|u^\theta|^2}{r^2},
\]
so this term is contained in the ordinary $H^1(\R^3)$ norm.  Hence $Y_2\in L^2$.

For $q=3$, $Y_3=u^\theta\in L^3$ by interpolation between $L^2$ and $L^6$.  If $2<q<3$, put $a=6/q-2\in(0,1)$.  Pointwise,
\[
 |Y_q|=|Y_2|^a|Y_3|^{1-a},
 \qquad
 \frac1q=\frac a2+\frac{1-a}{3},
\]
so H\"older interpolation gives $Y_q\in L^q$.  This proves \eqref{eq:auto-Yq-basic}.

For $q\ge3$, if $\Gamma\in L^\infty$, the identity
\begin{equation}\label{eq:Yq-Gamma-interp}
 |Y_q|^q=|u^\theta|^3|\Gamma|^{q-3}
\end{equation}
gives
\[
 \|Y_q\|_q^q
 \le\|\Gamma\|_\infty^{q-3}\|u^\theta\|_3^3<\infty.
\]
This proves \eqref{eq:auto-Yq}.
\end{proof}

The circulation maximum principle gives
\[
 \|\Gamma(t)\|_{L^\infty}
 \le \|\Gamma_0\|_{L^\infty}
 \qquad (t<T),
\]
so under bounded initial circulation the conclusion of \cref{prop:automatic-critical-mass} is available at every smooth preterminal time.

\begin{proposition}\label{prop:q2-sharpness}
The lower endpoint $q=2$ in \cref{prop:automatic-critical-mass} is sharp for automatic control from the standard Sobolev class, even with bounded circulation.  More precisely, there exist smooth divergence-free axisymmetric data $u_0\in H^2(\R^3)$ with $\Gamma_0=ru_0^\theta\in L^\infty$ such that
\[
 Y_q(0)=r^{1-3/q}u_0^\theta\notin L^q(\R^3)
 \qquad\text{for every }\frac32\le q<2,
\]
whereas $Y_2(0)\in L^2(\R^3)$.
\end{proposition}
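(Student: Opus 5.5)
The plan is to make the failure come from the axial direction, not from the axis or from large $r$, by taking pure swirl data that factor in $(r,z)$.

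First, the other two regions cannot produce failure. Write $\|Y_q\|_q^q=2\pi\int\!\!\int r^{q-2}|u^\theta|^q\,\dd r\,\dd z$.
\begin{itemize}[label=$\bullet$]
\item Near the axis, smoothness gives $u^\theta=O(r)$, so the integrand is $O(r^{2q-2})$. This is integrable for every $q\ge3/2$.
\item For large $r$, bounded circulation gives $|u^\theta|\lesssim r^{-1}$, so the radial integrand is $O(r^{-2})$, again integrable.
\end{itemize}

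So I take
\[
 u_0=u_0^\theta e_\theta,\qquad u_0^\theta(r,z)=a(r)\,b(z).
\]
Here $a\in C_c^\infty([0,\infty))$ is odd-extendable with $a(r)=r$ near $0$. Then $u_0^\theta e_\theta=b(z)\,\tilde a(r)(-x_2,x_1,0)$ with $\tilde a(r)=a(r)/r$ smooth and compactly supported. A pure swirl field is automatically divergence-free, since $\partial_\theta u^\theta=0$ and $u^r=u^z=0$, and it is axisymmetric. Its smoothness and the $H^2$ norm are governed by $b$ and its derivatives. The circulation $\Gamma_0=r\,a(r)\,b(z)$ is bounded as soon as $b\in L^\infty$, because $ra(r)$ is bounded on the compact support.

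Next I choose $b$ smooth on $\R$ with
\[
 b(z)=|z|^{-1/2}(\log|z|)^{-1}\qquad\text{for }|z|\ge e.
\]
\begin{itemize}[label=$\bullet$]
\item Then $\int|b|^2\,dz\sim\int^\infty\frac{dz}{z\log^2z}<\infty$, so $b\in L^2$.
\item Each derivative gains at least one power of $|z|^{-1}$, so $b',b''\in L^2$. Hence $b\in H^2(\R)$, and $u_0\in H^2(\R^3)$ because the $r$-factor is a fixed compactly supported smooth function.
\item For $3/2\le q<2$, $|b|^q\sim |z|^{-q/2}(\log|z|)^{-q}$ with $q/2<1$, so $\int|b|^q\,dz=\infty$.
\end{itemize}
Since the variables separate,
\[
 \|Y_q(0)\|_q^q=2\pi\left(\int_0^\infty r^{q-2}|a(r)|^q\,\dd r\right)\left(\int_\R|b(z)|^q\,\dd z\right).
\]
The radial factor is finite and strictly positive for $q\ge3/2$ (integrand $\sim r^{2q-2}$ near $0$, compact support, $a\not\equiv0$). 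The axial factor is infinite for $q<2$, so $Y_q(0)\notin L^q$ for every $3/2\le q<2$. Finally, $Y_2(0)\in L^2$ follows either from \cref{prop:automatic-critical-mass}, since $u_0\in H^1$, or directly from $b\in L^2$.

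The main obstacle is choosing $b$ in the borderline class $L^2\setminus\bigcup_{q<2}L^q$ while keeping $b$ smooth with $H^2$ derivatives. The logarithmic correction to $|z|^{-1/2}$ does exactly this, and the only remaining check is the routine derivative decay estimate.
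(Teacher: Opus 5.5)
Your proposal is correct and is essentially the paper's proof: a pure-swirl field $u_0^\theta=r\chi(r)f(z)$ with a compactly supported radial factor and an axial profile $f\sim|z|^{-1/2}(\log|z|)^{-1}$, so that separation of variables reduces $Y_q(0)\in L^q(\R^3)$ to $f\in L^q(\R)$. The differences are cosmetic: the paper uses the globally defined $f(z)=(1+z^2)^{-1/4}[\log(e+z^2)]^{-1}$ instead of a glued profile, and your preliminary heuristic about the axis and large-$r$ regions is not needed for the argument.
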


\begin{proof}
Choose $\chi\in C_c^\infty([0,\infty))$, with $\chi(r)=\chi_0(r^2)$ near the axis, and let
\[
 f(z)=(1+z^2)^{-1/4}\,[\log(e+z^2)]^{-1}.
\]
As $|z|\to\infty$,
\[
 f(z)=O\!\left(|z|^{-1/2}(\log|z|)^{-1}\right),\qquad
 f'(z)=O\!\left(|z|^{-3/2}(\log|z|)^{-1}\right),
\]
and
\[
 f''(z)=O\!\left(|z|^{-5/2}(\log|z|)^{-1}\right).
\]
Hence $f\in H^2(\R)\cap L^\infty(\R)$ and $f\in L^2(\R)$, whereas $f\notin L^q(\R)$ for every $q<2$: for large $|z|$,
\[
 |f(z)|^q\gtrsim |z|^{-q/2}(\log|z|)^{-q},
\]
and $q/2<1$.  Define the pure-swirl field
\[
 u_0=(-x_2\chi(r)f(z),\ x_1\chi(r)f(z),\ 0),
\]
so that $u_0^\theta=r\chi(r)f(z)$.  The field is smooth, divergence free, belongs to $H^2(\R^3)$, and
\[
 \Gamma_0=r^2\chi(r)f(z)\in L^\infty.
\]
For every $q\ge3/2$,
\[
 |Y_q|^q\,\dd x
 =2\pi r^{2q-2}|\chi(r)|^q|f(z)|^q\,\dd r\,\dd z.
\]
The radial integral is finite and strictly positive because $2q-2>-1$ and $\chi$ is compactly supported.  Hence $Y_q\in L^q(\R^3)$ if and only if $f\in L^q(\R)$.  The stated sharpness follows.
\end{proof}

\begin{remark}\label{rem:moment-tilt}
For $q\ge3$ the critical density has the exact representation
\begin{equation}\label{eq:moment-tilt}
 |Y_q|^q=|u^\theta|^3|\Gamma|^{q-3},
 \qquad
 \overline U_q
 =\frac{\int U|u^\theta|^3|\Gamma|^{q-3}\,\dd\mu_3}
 {\int |u^\theta|^3|\Gamma|^{q-3}\,\dd\mu_3}.
\end{equation}
Thus $q=3$ weights the strain by cubic swirl density, while larger $q$ tilt the same signed average toward regions carrying larger circulation.  No monotonicity in $q$ is asserted.
\end{remark}

\begin{corollary}\label{cor:q3-universal}
Let $u$ be a smooth axisymmetric $H^2$ solution on $[0,T)$; no bounded-circulation hypothesis is needed in this corollary.  Define
\[
 \overline U_3(t)
 =\frac{\int U|u^\theta|^3\,\dd\mu_3}{\int|u^\theta|^3\,\dd\mu_3}.
\]
If for some $t_0<T$
\[
 \sup_{t_0<t<T}\int_{t_0}^t-\overline U_3(s)\,\dd s<\infty,
\]
then $u$ extends smoothly through $T$.
\end{corollary}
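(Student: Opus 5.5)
The plan is to read this as the special case $q=3$ of \cref{cor:critical-family}, i.e.\ $d=d_3=0$, $p=3$ in \cref{thm:two-parameter-action}. Then $Y_3=r^0u^\theta=u^\theta$, and $\overline U_3$ coincides with $\overline U_{0,3}$ as defined in \eqref{eq:Zdp-Udp}; the hypothesis of the corollary is exactly $\mathcal C_{0,3}(t_0,T)<\infty$. The threshold condition $p\ge 3/(1-d)$ reads $3\ge3$ and holds with equality. Since $d=0$ lies in the range $0\le d<1$, the bounded-circulation assumption in \cref{thm:two-parameter-action} is not needed there, and \cref{cor:critical-family} imposes it only for $q<3$. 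This is the point that removes any circulation hypothesis from the present statement.

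What remains is the mass hypothesis $Y_3(t_0)=u^\theta(t_0)\in L^3(\R^3)$. I will take it from \cref{prop:automatic-critical-mass} at $q=3$. Since $u$ is a smooth $H^2$ solution on $[0,T)$, $u(t_0)\in H^1(\R^3)$. The swirl component satisfies $|u^\theta|\le|u|$ pointwise, so $u^\theta(t_0)\in L^2\cap L^6$ by $H^1\hookrightarrow L^6$, and interpolation gives $u^\theta(t_0)\in L^3$. If $t_0=0$ is allowed, I will argue the same way directly from the initial data. Alternatively, I can replace $t_0$ by any later preterminal time $t_0'$. This is harmless because
\[
 \int_{t_0'}^t-\overline U_3\,\dd s=\int_{t_0}^t-\overline U_3\,\dd s-\int_{t_0}^{t_0'}-\overline U_3\,\dd s,
\]
and the last integral is finite because $U$ is bounded on compact preterminal intervals.

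Once both hypotheses are checked, \cref{thm:two-parameter-action} gives smooth continuation through $T$. Concretely, the integrating factor $\exp(3\int\overline U_3)$ bounds $|u^\theta|^{3/2}$ in $L^\infty_tL^2_x\cap L^2_t\dot H^1_x$. This places $u^\theta$ in $L^4_tL^6_x$, which lies on the critical line $2/4+3/6=1$. Theorem 1.1 of Chen--Fang--Zhang with weight $r^0$ then concludes.

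The main obstacle is making sure the $d=0$ case of Chen--Fang--Zhang really requires no circulation bound, and that the endpoint equality $2/P+3/Q=1$ with $Q=6>3$ lies inside that theorem's admissible range. Everything else is bookkeeping with earlier results.
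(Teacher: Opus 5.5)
Your proposal is correct and follows the paper's route. You specialize \cref{thm:two-parameter-action} to $d=0$, $p=3$, take $u^\theta(t_0)\in L^3$ from $H^1$ interpolation, obtain $u^\theta\in L^4_tL^6_x$ from the integrating factor, and apply Theorem~1.1 of Chen--Fang--Zhang at $d=0$, which is exactly why no circulation bound is needed. Your remark about shifting $t_0$ to a positive time, to match the $0<t_0$ hypothesis of the theorem, is harmless bookkeeping that the paper leaves implicit.
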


\begin{proof}
Here $q=3$, so $d_q=0$ and $Y_q=u^\theta$.  The critical mass is finite by $H^1$ interpolation.  The proof of \cref{thm:two-parameter-action} gives
$u^\theta\in L_t^4L_x^6$.  Since $d=0$ lies in Theorem~1.1 of Chen--Fang--Zhang \cite{ChenFangZhang2017}, no circulation interpolation is required.
\end{proof}

\begin{corollary}\label{cor:critical-form-bound}
Let $u$ be a smooth axisymmetric $H^2$ solution on $[0,T)$ and fix $3/2\le q<\infty$.  Assume $Y_q(t_0)\in L^q$ for some $0<t_0<T$; when $3/2\le q<3$, also assume bounded initial circulation $\Gamma_0\in L^\infty$.  Let $K_q=|Y_q|^{q/2}$.  If, for almost every $t\in(t_0,T)$,
\begin{equation}\label{eq:critical-form-bound}
 \int U_-K_q^2\,\dd\mu_3
 \le\Lambda\int|\nabla K_q|^2\,\dd\mu_3,
 \qquad
 \Lambda<\frac{4\nu(q-1)}{3q},
\end{equation}
then $u$ extends smoothly through $T$.
\end{corollary}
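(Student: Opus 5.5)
We work directly from the critical balance \eqref{eq:critical-balance} of \cref{cor:critical-form-bound}'s parent \cref{cor:critical-family}, rather than from the signed action. The form bound is meant to absorb the compressive part of the right-hand side into the dissipation, so that the mixed-norm conclusion of the proof of \cref{thm:two-parameter-action} follows without any integrating factor. Continuation then follows from Chen--Fang--Zhang exactly as before.

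First we split $U=U_+-U_-$. The right side of \eqref{eq:critical-balance} is
\[
 -3\int U K_q^2\,\dd\mu_3=-3\int U_+K_q^2\,\dd\mu_3+3\int U_-K_q^2\,\dd\mu_3\le 3\int U_-K_q^2\,\dd\mu_3 .
\]
By \eqref{eq:critical-form-bound}, this is at most $3\Lambda\int|\nabla K_q|^2\,\dd\mu_3$ for a.e.\ $t$. Since $|\nabla|Y_q|^{q/2}|=|\nabla K_q|$, we obtain
\[
 \frac{\dd}{\dd t}Z_q+\Bigl(\frac{4\nu(q-1)}q-3\Lambda\Bigr)\int|\nabla K_q|^2\,\dd\mu_3+(\text{nonnegative Hardy or axis-trace term})\le0 .
\]
The coefficient $\delta:=4\nu(q-1)/q-3\Lambda$ is strictly positive by the hypothesis on $\Lambda$. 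At $q=3/2$, the nonnegative term is the axis trace $2\nu\int|F(0,z,t)|^{3/2}\,\dd z$; for $q>3/2$, it is the Hardy term with coefficient $3\nu(2-3/q)\ge0$. Both are simply dropped.

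Next we integrate on compact subintervals $[t_0,T']\Subset[t_0,T)$, where the identity is justified by \cref{prop:Ydp} and the regularization in its proof, with $U$ bounded there. This gives
\[
 \sup_{t_0<t<T}Z_q(t)+\delta\int_{t_0}^T\|\nabla K_q\|_2^2\,\dd t\le Z_q(t_0)<\infty ,
\]
uniformly in $T'$. Hence $K_q\in L^\infty_tL^2_x\cap L^2_t\dot H^1_x$ on $(t_0,T)$, and Gagliardo--Nirenberg yields $Y_q\in L^{4q/3}_tL^{2q}_x$ exactly as in \eqref{eq:Yd-mixed}. The exponent check \eqref{eq:mixed-line-d} with $d=d_q$ and $p=q$ is critical, with $2/P+3/Q=1-d_q$ and $Q=2q>q=3/(1-d_q)$. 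We restart the solution at $t_0$ and apply Chen--Fang--Zhang: Theorem~1.1 for $q\ge3$ ($d_q\ge0$), and Remark~1.3 with the bounded-circulation hypothesis for $3/2\le q<3$ ($d_q<0$). This gives smooth extension through $T$.

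The main obstacle is the rigor of the a.e.-in-time absorption when $1<q<2$, that is, $3/2\le q<2$. There the identity for $Z_q$ is obtained only through the $\Phi_\varepsilon$ regularization, and the form bound is stated for $K_q$ itself, not for the regularized quantities. We handle this by working with the integrated-in-time version of the exact identity, which the proof of \cref{prop:Ydp} establishes in the limit $\varepsilon\to0$ on each compact preterminal interval. The inequality is applied only after the limit, to the integrated form, so no pointwise-in-time differentiability is needed. Uniformity as $T'\uparrow T$ is automatic because the bound depends only on $Z_q(t_0)$ and $\delta$.
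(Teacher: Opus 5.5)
Your proposal is correct and is essentially the paper's own proof. You drop the nonnegative Hardy term (or the axis trace at $q=3/2$), bound $-3\int UK_q^2\,\dd\mu_3\le 3\int U_-K_q^2\,\dd\mu_3\le 3\Lambda\int|\nabla K_q|^2\,\dd\mu_3$, use the strict inequality on $\Lambda$ to keep positive dissipation, and then run the mixed-norm Chen--Fang--Zhang continuation argument of \cref{thm:two-parameter-action}. Your extra remark, that in the range $3/2\le q<2$ the form bound should be applied to the time-integrated identity, usefully makes explicit a point the paper leaves implicit.
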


\begin{proof}
In \eqref{eq:critical-balance}, discard the nonnegative Hardy term (or the nonnegative axis trace at $q=3/2$) and estimate
\[
 -3\int U K_q^2\,\dd\mu_3
 \le3\int U_-K_q^2\,\dd\mu_3
 \le3\Lambda\int|\nabla K_q|^2\,\dd\mu_3.
\]
The strict inequality in \eqref{eq:critical-form-bound} leaves positive gradient dissipation, so $Z_q$ remains bounded and $K_q\in L_t^2\dot H_x^1$.  The mixed-norm continuation argument of \cref{thm:two-parameter-action} applies.
\end{proof}

The left endpoint $q=3/2$ requires genuine strong critical mass.  The following example from the $F$ endpoint remains useful.

\begin{proposition}\label{prop:weak-endpoint}
There exist smooth divergence-free axisymmetric data $u_0\in H^2(\R^3)$ with finite energy and bounded circulation $\Gamma_0=ru_0^\theta$ such that
\[
 F_0=\frac{u_0^\theta}{r}
 \in L^{3/2,\infty}(\R^3)\setminus L^{3/2}(\R^3).
\]
\end{proposition}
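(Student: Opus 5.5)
Following the pure-swirl template of \cref{prop:q2-sharpness}, I would take $u_0=(-x_2\chi(r)g(z),\,x_1\chi(r)g(z),\,0)$, so that $u_0^\theta=r\chi(r)g(z)$, $\Gamma_0=r^2\chi(r)g(z)$ and $F_0=\chi(r)g(z)$. Here $\chi\in C_c^\infty([0,\infty))$ has the form $\chi(r)=\chi_0(r^2)$ near the axis and is not identically zero. The field is automatically divergence free, smooth across the axis, and it has bounded circulation as soon as $g\in L^\infty$. In $\R^3$ the function $F_0$ is a product of a compactly supported radial profile and a one-dimensional profile in $z$. Its distribution function therefore factorizes up to constants, and the task reduces to choosing $g$ on $\R$.

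The natural choice is $g(z)=(1+z^2)^{-1/3}$, so that $g\sim|z|^{-2/3}$ at infinity. Then $g\in L^{3/2,\infty}(\R)$, but $g\notin L^{3/2}(\R)$, because $|g|^{3/2}\sim|z|^{-1}$. At the same time $g\in L^2(\R)$, since $|g|^2\sim|z|^{-4/3}$. The derivatives satisfy $g'=O(|z|^{-5/3})$ and $g''=O(|z|^{-8/3})$, so $g\in H^2(\R)\cap L^\infty(\R)$. This gives $u_0\in H^2(\R^3)$ and finite energy: every derivative of $u_0$ up to order two is a smooth compactly supported function of $(x_1,x_2)$ times one of $g,g',g''$. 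Smoothness at $r=0$ follows from $x_j\chi_0(|x'|^2)$ being smooth.

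The remaining claims concern $F_0$ itself. First, $F_0\notin L^{3/2}$, because
\[
 \int|F_0|^{3/2}\,\dd x=2\pi\int_0^\infty|\chi|^{3/2}r\,\dd r\int_\R|g|^{3/2}\,\dd z=\infty .
\]
For the weak bound I would avoid computing the distribution function exactly and instead bound $|F_0|\le\|\chi\|_\infty\,\mathbf 1_{\{r\le R\}}\,g(z)$, where $R$ bounds $\operatorname{supp}\chi$. Then
\[
 |\{|F_0|>\lambda\}|\le\pi R^2\,\bigl|\{z:\|\chi\|_\infty g(z)>\lambda\}\bigr|\lesssim\lambda^{-3/2},
\]
because $g(z)\le|z|^{-2/3}$. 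This shows $F_0\in L^{3/2,\infty}(\R^3)$.

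I expect the main subtle point to be the compatibility of all three requirements at once. We need $H^2$ membership of $u_0$, which demands $g\in L^2$, and we need failure of $L^{3/2}$ together with weak-$L^{3/2}$ membership, which pins the decay rate at $|z|^{-2/3}$. These are compatible because $2>3/2$. The only other thing to check is that the weak-$L^{3/2}$ estimate in $\R^3$ does not pick up extra growth from the radial variable, and compact support in $r$ handles that.
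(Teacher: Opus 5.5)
Your proposal is correct and uses essentially the paper's construction: the same pure-swirl field $u_0=(-x_2\chi(r)f(z),\,x_1\chi(r)f(z),\,0)$ with $f(z)=(1+z^2)^{-1/3}$, and the same factorized divergence $\int|F_0|^{3/2}\,\dd x=C_\chi\int_\R(1+z^2)^{-1/2}\,\dd z=\infty$. The differences are cosmetic: the paper takes $\chi=1$ on $[0,1]$ and asserts the two-sided comparability $|\{|F_0|>\lambda\}|\simeq\lambda^{-3/2}$, whereas you prove only the upper bound via $f(z)\le|z|^{-2/3}$ (which is all weak-$L^{3/2}$ membership needs) and check the $H^2$ claim explicitly through the decay of $f'$ and $f''$.
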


\begin{proof}
Choose $\chi\in C_c^\infty([0,\infty))$ with $\chi=1$ on $[0,1]$ and write $\chi(r)=\chi_0(r^2)$ near the axis.  Let
\[
 f(z)=(1+z^2)^{-1/3},
 \qquad
 F_0(r,z)=\chi(r)f(z),
 \qquad
 u_0=(-x_2F_0,x_1F_0,0).
\]
Then $u_0=u_0^\theta e_\theta$ with $u_0^\theta=rF_0$.  It is smooth, divergence free, belongs to $H^2(\R^3)$, has finite energy, and has bounded circulation $\Gamma_0=r^2\chi(r)f(z)$.  For small $\lambda>0$, the one-dimensional measure of $\{f>\lambda\}$ is comparable to $\lambda^{-3/2}$, and the radial factor is nonzero on a fixed disk.  Hence
\[
 |\{|F_0|>\lambda\}|\simeq\lambda^{-3/2},
\]
so $F_0\in L^{3/2,\infty}$.  But
\[
 \int_{\R^3}|F_0|^{3/2}\,\dd x
 =C_\chi\int_\R(1+z^2)^{-1/2}\,\dd z=\infty.
\]
\end{proof}

\begin{remark}\label{rem:lorentz-scope}
Mixed and anisotropic Lorentz-space criteria for axisymmetric Navier--Stokes are available in the literature \cite{WangHuangWeiYu2024}.  Proposition~\ref{prop:weak-endpoint} does not rule out a weak-$L^{3/2}$ continuation theorem of a different type.  It shows that the left endpoint of the present action family cannot be obtained by the formal substitution $L^{3/2}\rightsquigarrow L^{3/2,\infty}$, because the denominator defining $\overline U_{3/2}$ may be infinite.
\end{remark}

\begin{corollary}\label{cor:continuum-necessary-action}
Let $T$ be a first singular time for a smooth axisymmetric $H^2$ solution with bounded circulation.  For every sufficiently late $t_0<T$ and every finite $q\ge2$,
\begin{equation}\label{eq:continuum-action-diverge}
 \sup_{t_0<t<T}\int_{t_0}^t-\overline U_q(s)\,\dd s=\infty.
\end{equation}
If $Y_q(t_0)\in L^q$ is additionally known for some $3/2\le q<2$, then \eqref{eq:continuum-action-diverge} holds at that exponent as well.
\end{corollary}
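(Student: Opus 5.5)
The argument is by contradiction, combining \cref{cor:critical-family} with \cref{prop:automatic-critical-mass}. Let $T$ be a first singular time. Fix any $t_0\in(0,T)$; "sufficiently late" only needs $t_0>0$ so that the solution is smooth, in $H^2\subset H^1$, at time $t_0$. Fix a finite $q\ge2$.

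First I check that the hypotheses of \cref{cor:critical-family} hold at this $q$. The circulation maximum principle stated after \cref{prop:automatic-critical-mass} gives $\|\Gamma(t_0)\|_{L^\infty}\le\|\Gamma_0\|_{L^\infty}<\infty$. Since $u(t_0)$ is a smooth axisymmetric $H^1$ field with bounded circulation, \cref{prop:automatic-critical-mass} yields $Y_q(t_0)\in L^q(\R^3)$ for every finite $q\ge2$. For $2\le q<3$, \cref{cor:critical-family} also requires $\Gamma_0\in L^\infty$, which is assumed. For $q\ge3$ no further hypothesis is needed.

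Now suppose \eqref{eq:continuum-action-diverge} fails at some such $q$, i.e.\ $\mathcal A_q(t_0,T)<\infty$. Then \cref{cor:critical-family} shows that $u$ extends smoothly through $T$, contradicting the assumption that $T$ is singular. Hence the supremum is infinite. The same argument applies verbatim when $3/2\le q<2$ and $Y_q(t_0)\in L^q$ is assumed. In that range bounded initial circulation is again part of the hypothesis, as \cref{cor:critical-family} requires for $q<3$.

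There is no real obstacle. The only point needing care is that \cref{cor:critical-family} is applied to the solution restarted at $t_0$. Its proof, via \cref{thm:two-parameter-action}, uses only data on $(t_0,T)$ and the $H^2$ smoothness at $t_0$, which holds because $t_0<T$ lies in the smooth interval. So "sufficiently late" imposes no constraint beyond $0<t_0<T$.
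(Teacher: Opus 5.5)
Your proposal is correct and matches the paper's proof: \cref{prop:automatic-critical-mass}, together with the circulation maximum principle, supplies the critical mass $Y_q(t_0)\in L^q$ at every finite $q\ge2$. The contrapositive of \cref{cor:critical-family} then forces the action to diverge, and the range $3/2\le q<2$ is handled the same way once that mass is assumed. Your remark that ``sufficiently late'' only needs $0<t_0<T$ is also consistent with the proof of \cref{thm:two-parameter-action}, which restarts at $t_0$ and uses no information from $(0,t_0)$.
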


\begin{proof}
For $q\ge2$, \cref{prop:automatic-critical-mass} supplies the critical mass and \cref{cor:critical-family} supplies continuation whenever the action is bounded.  The statement is the contrapositive.  The same argument applies in $3/2\le q<2$ whenever the corresponding mass is finite.
\end{proof}

\section{Moving-boundary heat estimates}

We next isolate the scalar estimate behind the circulation criterion.  Let $g\ge0$ and consider
\begin{equation}\label{eq:scalar}
 \partial_t q=\nu q_{rr}+g(t)q_r,
 \qquad r>0,
 \qquad q(0,t)=0,
 \qquad 0\le q\le M.
\end{equation}
Set
\[
 A(t)=\int_0^t g(s)\,\dd s,
 \qquad
 D_\sigma(h)=A(\sigma)-A(\sigma-h).
\]
Fix $\sigma>0$ and a reference window $0<h_0<\sigma$.  Write $\tau=\sigma-t$ and
\[
 s=\log\frac{h_0}{\tau},\qquad \tau=h_0e^{-s}.
\]
For $s\ge0$ define
\begin{equation}\label{eq:a-def}
 a_\sigma(s)=\frac{D_\sigma(h_0e^{-s})}{\sqrt{\nu h_0e^{-s}}}.
\end{equation}

The following weighted Hardy inequality is the quantitative input.

\begin{lemma}\label{lem:gaussHardy}
Let $a\ge0$ and $\rho(y)=e^{-y^2/4}$.  If $f(-a)=0$, then
\begin{equation}\label{eq:gaussHardy}
 \int_{-a}^{\infty}f(y)^2\rho(y)\,\dd y
 \le C\frac{e^{a^2/4}}{1+a}
 \int_{-a}^{\infty}|f'(y)|^2\rho(y)\,\dd y,
\end{equation}
where $C$ is universal.
\end{lemma}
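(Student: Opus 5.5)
We prove \eqref{eq:gaussHardy} by a ground-state (Doob) substitution adapted to the Gaussian weight, with separate treatment of the near region $[-a,0]$ and the Gaussian tail $[0,\infty)$. The factor $e^{a^2/4}/(1+a)$ is the size of $\rho(-a)^{-1}$ divided by the width of the boundary layer near $y=-a$, so the estimate should follow from a one-dimensional Hardy inequality with an explicit weight.

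\emph{Step 1: representation and Cauchy--Schwarz.} Since $f(-a)=0$, we write $f(y)=\int_{-a}^y f'(s)\,\dd s$. For any positive weight $w$,
\[
 f(y)^2\le\Bigl(\int_{-a}^y\frac{\dd s}{w(s)}\Bigr)\Bigl(\int_{-a}^y |f'|^2w\,\dd s\Bigr).
\]
Taking $w=\rho$ gives
\[
 \int_{-a}^\infty f^2\rho\,\dd y\le \Bigl(\int_{-a}^\infty|f'|^2\rho\Bigr)\int_{-a}^\infty \rho(y)\int_{-a}^y e^{s^2/4}\,\dd s\,\dd y .
\]
This crude bound diverges on the tail, so we split the argument.

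\emph{Step 2: the Gaussian tail.} On $[0,\infty)$ (and in fact on all of $\R$) the Gaussian Poincaré-type inequality applies. The operator $-\rho^{-1}(\rho f')'=-f''+\tfrac y2 f'$ has spectral gap $1/2$ on $L^2(\rho)$. Hence
\[
 \int (f-\bar f)^2\rho\le 2\int|f'|^2\rho .
\]
We therefore only need to control the mean $\bar f$, or equivalently $\int_{-a}^{0} f^2\rho$ plus a boundary term at $0$.

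\emph{Step 3: the Muckenhoupt criterion.} I would instead apply the weighted Hardy inequality with Muckenhoupt constant directly. For $f(-a)=0$,
\[
 \int_{-a}^\infty f^2\rho\le 4B\int_{-a}^\infty|f'|^2\rho,\qquad B=\sup_{x>-a}\Bigl(\int_x^\infty\rho\Bigr)\Bigl(\int_{-a}^x\rho^{-1}\Bigr).
\]
It remains to show $B\lesssim e^{a^2/4}/(1+a)$. We estimate the two factors with Laplace-type asymptotics:
\[
 \int_{-a}^x e^{s^2/4}\,\dd s\lesssim \frac{e^{a^2/4}+e^{x^2/4}}{1+\min(a,|x|)}\quad\text{(roughly)},\qquad \int_x^\infty e^{-y^2/4}\,\dd y\lesssim \frac{e^{-x^2/4}}{1+x_+}.
\]
We then bound the product case by case:
\begin{enumerate}
\item For $x\le0$, the first integral is at most $C e^{a^2/4}/(1+a)$, since $e^{s^2/4}$ is largest at $s=-a$ and integrates over a layer of width $\sim 1/(1+a)$. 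The second is at most $\sqrt{4\pi}$, so $B\lesssim e^{a^2/4}/(1+a)$.
\item For $0<x\le a$, the first integral is $\lesssim e^{a^2/4}/(1+a)+ e^{x^2/4}/(1+x)$, and the second is $\lesssim e^{-x^2/4}/(1+x)$. The product is $\lesssim e^{a^2/4}/(1+a)+1$.
\item For $x>a$, the product is $\lesssim e^{x^2/4}e^{-x^2/4}/(1+x)^2+e^{a^2/4}/(1+a)$, which gives the same bound.
\end{enumerate}
Since $e^{a^2/4}/(1+a)\ge c>0$ for all $a\ge0$, the constant terms are absorbed, and this yields \eqref{eq:gaussHardy}.

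\emph{Main obstacle.} The delicate point is the uniform Laplace estimate $\int_0^a e^{s^2/4}\,\dd s\le C e^{a^2/4}/(1+a)$ uniformly in $a\ge0$. It follows from $\frac{\dd}{\dd s}\bigl(e^{s^2/4}/s\bigr)=e^{s^2/4}(\tfrac12-s^{-2})$ for $s\ge2$, with the range $s\le 2$ trivially bounded. The analogous tail bound follows from the Mills ratio. Step 2 is then unnecessary, and the Muckenhoupt route alone completes the proof.
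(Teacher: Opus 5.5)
Your final argument (Step 3) is correct and is essentially the paper's proof: Muckenhoupt's criterion with optimal constant at most $4B_a$, then the same case split in $x$ using the Gaussian tail bound and the Laplace estimate $\int_0^a e^{s^2/4}\,\dd s\le Ce^{a^2/4}/(1+a)$. The only differences are cosmetic: you obtain that Laplace bound from $(e^{s^2/4}/s)'\ge\tfrac14 e^{s^2/4}$ on $s\ge2$ instead of splitting $[0,a]$ at $a/2$, and your Steps 1--2 are unused scaffolding that can be deleted.
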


\begin{proof}
The one-dimensional weighted Hardy criterion of Muckenhoupt \cite{Muckenhoupt1972} bounds the optimal constant in \eqref{eq:gaussHardy} by $4B_a$, where
\[
 B_a=\sup_{x>-a}
 \left(\int_x^\infty\rho(y)\,\dd y\right)
 \left(\int_{-a}^x\rho(y)^{-1}\,\dd y\right).
\]
For completeness we estimate $B_a$ directly.  If $-a<x\le0$, then
\[
 \int_x^\infty\rho\le C,
 \qquad
 \int_{-a}^x e^{y^2/4}\,\dd y
 \le\int_{-a}^0e^{y^2/4}\,\dd y
 \le C\frac{e^{a^2/4}}{1+a}.
\]
For $a\ge1$, the part $[0,a/2]$ is at most $(a/2)e^{a^2/16}$, which is bounded by $Ce^{a^2/4}/a$.  On $[a/2,a]$ we use
$a^2-y^2=(a-y)(a+y)\ge a(a-y)$ to obtain
\[
 \int_{a/2}^{a}e^{y^2/4}\,\dd y
 \le e^{a^2/4}\int_0^\infty e^{-as/4}\,\dd s
 =\frac4a e^{a^2/4}.
\]
The case $0\le a\le1$ is trivial after enlarging the universal constant.

If $x\ge0$, the Gaussian tail bound gives
\[
 \int_x^\infty e^{-y^2/4}\,\dd y
 \le C\frac{e^{-x^2/4}}{1+x},
\]
while
\[
 \int_{-a}^x e^{y^2/4}\,\dd y
 \le C\left(
 \frac{e^{a^2/4}}{1+a}+\frac{e^{x^2/4}}{1+x}
 \right).
\]
Multiplication yields
\[
 \left(\int_x^\infty\rho\right)
 \left(\int_{-a}^x\rho^{-1}\right)
 \le C\left(
 \frac{e^{a^2/4}}{1+a}+1
 \right)
 \le C\frac{e^{a^2/4}}{1+a},
\]
after enlarging the universal constant.  Thus $B_a\le C e^{a^2/4}/(1+a)$, which proves the lemma.
\end{proof}

\begin{theorem}\label{thm:scalar-general}
Let $q$ solve \eqref{eq:scalar}.  There exist universal constants $c,C>0$ such that, for every fixed terminal time $\sigma$ and all sufficiently small $x>0$, let
\[
 S_x=\log\frac{\nu h_0}{4x^2}.
\]
Then
\begin{equation}\label{eq:scalar-general-est}
 q(x,\sigma)
 \le CM\exp\left
 \{-c\int_{0}^{S_x}
 (1+a_\sigma(s))e^{-a_\sigma(s)^2/4}\,\dd s\right
 \}.
\end{equation}
Changing the fixed reference window $h_0$ changes only the constants and an additive bounded part of the exponent.
\end{theorem}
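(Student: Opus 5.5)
The plan is to straighten the drift by passing to Lagrangian coordinates, read the problem near $(0,\sigma)$ in parabolic self-similar variables, and run a Gaussian-weighted $L^2$ decay estimate in which \cref{lem:gaussHardy} supplies the spectral gap.

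First I would remove the drift. The equation $\partial_tq=\nu q_{rr}+g q_r$ is transport along $\dot r=-g$ plus diffusion. So I set
\[
X=r+A(t)-A(\sigma),\qquad w(X,t)=q(r,t).
\]
Then $w$ solves the pure heat equation $w_t=\nu w_{XX}$ on the moving half-line $X>-D_\sigma(\sigma-t)$, with $w=0$ on the boundary and $0\le w\le M$. At $t=\sigma$ the boundary sits at $X=0$ and the target point is $X=x$.

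Next I pass to self-similar variables centred at $(0,\sigma)$:
\[
y=\frac{X}{\sqrt{\nu\tau}},\qquad \tau=\sigma-t=h_0e^{-s}.
\]
In these variables the boundary is exactly $y=-a_\sigma(s)$, by \eqref{eq:a-def}. To reach $(x,\sigma)$ I would use the backward heat kernel with vertex at $(0,\sigma)$, in the spirit of Poon/Struwe monotonicity. Concretely, I consider
\[
E(s)=\int_{-a_\sigma(s)}^\infty W(y,s)^2\rho(y)\,\dd y,\qquad \rho=e^{-y^2/4},
\]
where $W$ is $w$ renormalized by the factor $\tau^{1/4}$ (or $\tau^{1/2}$) that makes the Gaussian pairing scale-neutral. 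In the kernel frame the rescaled operator is the Ornstein--Uhlenbeck operator, which is self-adjoint in $L^2(\rho)$. Differentiating in $s$ should give
\[
\frac{\dd E}{\dd s}\le -2\int|W_y|^2\rho+c_0E
\]
plus a boundary-flux term. The flux term is $W^2\rho$ evaluated at $y=-a$, times the boundary speed, and it vanishes because $W=0$ there; this is why the sign of $\dot a$ is irrelevant and only the instantaneous position $a_\sigma(s)$ enters. The constant $c_0$ has to be cancelled by the normalization, i.e. it is the ground-state level of the full-line problem. Once it is, \cref{lem:gaussHardy} applied to $W(\cdot,s)$, which vanishes at $-a_\sigma(s)$, gives
\[
\frac{\dd E}{\dd s}\le -c(1+a_\sigma(s))e^{-a_\sigma(s)^2/4}E.
\]
Integrating from $s=0$, where $E(0)\lesssim M^2$, up to $s=S_x$ produces the exponent in \eqref{eq:scalar-general-est}.

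To finish I convert the $L^2(\rho)$ bound into a pointwise bound at $(x,\sigma)$. The scale $S_x$ is chosen so that $\sqrt{\nu\tau}\sim 2x$, that is, $y=O(1)$ near the target, and from time $\sigma-h_0e^{-S_x}$ to $\sigma$ only a unit parabolic cylinder in $y$ remains. On that cylinder a local parabolic $L^2\to L^\infty$ estimate for the heat equation with Dirichlet data (extending $w$ by zero across the boundary) bounds $q(x,\sigma)$ by $C\sqrt{E(S_x)}$ plus a tail controlled by $M$. The tail is harmless because the boundary is already at most $O(\sqrt{\tau})$ away. Changing $h_0$ shifts $s$ by a constant and alters the integral only on a bounded $s$-range, which is where the final sentence of the theorem comes from.

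The main obstacle is the exact bookkeeping of the renormalized energy. I have to pick the weight and normalization so that the zeroth-order term $c_0E$ cancels exactly, leaving the Hardy gap as the only decay, and not a gap shifted by a constant. This is precisely what makes the endpoint coefficient $2$ (through $e^{-a^2/4}$) come out sharp. My first attempt would pair $w$ against the backward kernel $(4\pi\nu\tau)^{-1/2}e^{-X^2/4\nu\tau}$ and verify that the full-line ground state has eigenvalue zero in this frame. I would then check on the Brownian-exit heuristic, for a constant $a$, that the decay rate matches $(1+a)e^{-a^2/4}$.
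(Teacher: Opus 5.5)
Up to the last step your argument is the paper's. The shift $X=r+A(t)-A(\sigma)$, the similarity variables with Dirichlet boundary at $y=-a_\sigma(s)$, the Gaussian energy $E$, the vanishing boundary flux, \cref{lem:gaussHardy}, and integration from $s=0$ (where $E(0)\lesssim M^2$) to $S_x$ all match.

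The renormalization you call the main obstacle does not exist. For the unrenormalized $w(y,s)=v(z,t)$ the change of variables gives exactly $w_s=w_{yy}-\tfrac y2w_y=\rho^{-1}(\rho w_y)_y$. Its full-line ground state is the constant, with eigenvalue $0$. Hence $E'(s)=-2\int_{-a_\sigma(s)}^\infty|w_y|^2\rho\,\dd y$ with no zeroth-order term. Pairing with the backward kernel in $\dd X$ already gives $(4\pi)^{-1/2}\rho\,\dd y$. A factor $\tau^\gamma$ would only add $-2\gamma E$, which is undone when you convert back, so nothing ever needs cancelling.

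The genuine gap is the pointwise conversion. A bound $q(x,\sigma)\le C\sqrt{E(S_x)}+T$, with a tail $T$ merely controlled by $M$, proves nothing unless $T$ is itself $\lesssim Me^{-c\mathfrak P}$. Your reason for discarding it is not valid:
\begin{itemize}
\item Nothing bounds $a_\sigma(S_x)$; under the $K=2$ hypothesis it may be of order $\sqrt{\log\log(1/x)}$.
\item The dangerous contributions come from large positive $y$, where $w$ can be of size $M$ while $\rho$ is negligible, not from the boundary side.
\item A local $L^2\to L^\infty$ estimate on a cylinder reaching $(x,\sigma)$ cannot be fed by $E$ either. As $t\uparrow\sigma$, the point $X=x$ sits at $y=x/\sqrt{\nu\tau}\to\infty$, where the weight degenerates.
\end{itemize}

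The paper avoids tails altogether.
\begin{enumerate}
\item Take the single slice $t_0=\sigma-4x^2/\nu$, so that $\sqrt{\nu\tau_0}=2x$ and $s_0=S_x$.
\item Extend $v(\cdot,t_0)$ by zero below $A(t_0)$, and dominate $v$ on $[t_0,\sigma]$ by the free heat flow of this datum. Since $v\ge0$, the moving Dirichlet boundary only removes heat.
\item With $z'=A(\sigma)+2xy$ this gives $q(x,\sigma)\le C\int e^{-(1-2y)^2/16}\,w(y,S_x)\,\dd y$.
\item Cauchy--Schwarz against $\rho$ closes on the whole line, because $-\tfrac{(1-2y)^2}{8}+\tfrac{y^2}{4}=\tfrac18-\tfrac{(y-1)^2}{4}$ is integrable after exponentiation.
\end{enumerate}
Hence $q(x,\sigma)\le CE(S_x)^{1/2}$ with no tail term, and the theorem follows with $c$ halved. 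Replace your final step by this kernel domination and explicit Gaussian computation, and the proof is complete.
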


\begin{proof}
Set
\[
 z=r+A(t),
 \qquad
 v(z,t)=q(z-A(t),t).
\]
Since $A'=g$, direct differentiation gives
\[
 v_t=\nu v_{zz}
 \qquad\text{for }z>A(t),
 \qquad
 v(A(t),t)=0.
\]
Fix $\sigma$, let $\tau=\sigma-t$, $s=\log(h_0/\tau)$, and introduce
\[
 y=\frac{z-A(\sigma)}{\sqrt{\nu\tau}},
 \qquad
 w(y,s)=v(z,t).
\]
Then
\begin{equation}\label{eq:OU}
 w_s=w_{yy}-\frac y2w_y
 =\rho^{-1}(\rho w_y)_y,
 \qquad
 y>-a_\sigma(s),
 \qquad
 w(-a_\sigma(s),s)=0,
\end{equation}
with $\rho=e^{-y^2/4}$.

Define
\[
 E(s)=\int_{-a_\sigma(s)}^\infty w(y,s)^2\rho(y)\,\dd y.
\]
The moving-boundary contribution in $E'$ vanishes because $w=0$ at the boundary.  Integrating \eqref{eq:OU} by parts therefore gives
\[
 E'(s)=-2\int_{-a_\sigma(s)}^\infty |w_y|^2\rho\,\dd y.
\]
By \cref{lem:gaussHardy},
\begin{equation}\label{eq:Edecay}
 E'(s)
 \le-c(1+a_\sigma(s))e^{-a_\sigma(s)^2/4}E(s).
\end{equation}
Since $0\le w\le M$, $E(0)\le CM^2$.  Integrating \eqref{eq:Edecay} gives
\begin{equation}\label{eq:Eint}
 E(S)\le CM^2\exp\left\{-c\int_{0}^{S}
 (1+a_\sigma(s))e^{-a_\sigma(s)^2/4}\,\dd s\right\}.
\end{equation}

It remains to convert the Gaussian energy to a terminal pointwise bound.  Fix $x>0$ and set $t_0=\sigma-4x^2/\nu$; equivalently $\tau_0=4x^2/\nu$, so that $\sqrt{\nu\tau_0}=2x$.  Extend $v(\cdot,t_0)$ by zero below $A(t_0)$.  Since the moving Dirichlet boundary can only remove heat, the maximum principle gives domination by the free heat semigroup on $[t_0,\sigma]$.  Hence, at $z=A(\sigma)+x$,
\begin{align*}
 q(x,\sigma)
 &=v(A(\sigma)+x,\sigma)\\
 &\le \frac1{\sqrt{4\pi\nu(\sigma-t_0)}}
 \int_{A(t_0)}^\infty
 e^{-\frac{(A(\sigma)+x-z')^2}{4\nu(\sigma-t_0)}}
 v(z',t_0)\,\dd z'.
\end{align*}
With $z'=A(\sigma)+2xy$, this becomes
\[
 q(x,\sigma)
 \le C\int_{-a_\sigma(s_0)}^\infty
 e^{-(1-2y)^2/16}w(y,s_0)\,\dd y,
 \qquad
 s_0=\log(\nu h_0/(4x^2)).
\]
Cauchy--Schwarz with weight $\rho$ yields
\begin{align*}
 q(x,\sigma)
 &\le CE(s_0)^{1/2}
 \left(\int_\R e^{-(1-2y)^2/8}e^{y^2/4}\,\dd y\right)^{1/2}\\
 &\le CE(s_0)^{1/2},
\end{align*}
because
\[
 -\frac{(1-2y)^2}{8}+\frac{y^2}{4}
 =\frac18-\frac{(y-1)^2}{4}.
\]
Combining this with \eqref{eq:Eint} and $s_0=2|\log x|+O_{\nu,h_0}(1)$ proves \eqref{eq:scalar-general-est}.
\end{proof}

The classical Petrovskii constant now follows directly.

\begin{corollary}\label{cor:scalar-K}
Suppose that for some $0\le K\le2$,
\begin{equation}\label{eq:scalar-K-cond}
 D_\sigma(h)
 \le K\sqrt{\nu h\ell_{h_0}(h)}
\end{equation}
for all sufficiently small $h$.  Then, as $x\downarrow0$,
\begin{equation}\label{eq:scalar-K-decay}
 q(x,\sigma)\le
 \begin{cases}
 CM\exp\{-c|\log x|^{1-K^2/4}\},&0\le K<2,\\[1mm]
 CM\exp\{-c(\log|\log x|)^{3/2}\},&K=2.
 \end{cases}
\end{equation}
\end{corollary}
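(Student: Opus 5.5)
The plan is to insert the hypothesis \eqref{eq:scalar-K-cond} into the exponent of \cref{thm:scalar-general} and evaluate the resulting integral. The integrand $(1+a)e^{-a^2/4}$ is decreasing in $a$ once $a\ge 1$, and it stays bounded below by a positive constant when $a\le 1$. So an upper bound on $a_\sigma(s)$ gives a lower bound on the integrand. Since $h=h_0e^{-s}$, the hypothesis reads
\[
 a_\sigma(s)=\frac{D_\sigma(h)}{\sqrt{\nu h}}\le K\sqrt{\ell_{h_0}(h)}=K\sqrt{L(s)},
 \qquad L(s)=\log\bigl(e+s\bigr)\ \text{up to }O(1),
\]
because $\log(e^e h_0/h)=e+s$. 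The hypothesis is only assumed for small $h$, i.e.\ for $s\ge s_*$. Dropping the part of the integral over $[0,s_*]$ changes the exponent by a bounded amount, which we absorb into $C$.

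For $s\ge s_*$ we have $K^2L(s)\ge 1$, so we are in the decreasing range of the integrand. Writing $b=K\sqrt{L}$ and noting that $(1+b)e^{-b^2/4}$ is decreasing in $b$, we get
\[
 (1+a_\sigma(s))e^{-a_\sigma(s)^2/4}\ \ge\ c\,(1+K\sqrt{L(s)})\,e^{-K^2L(s)/4}\ \ge\ c\,\sqrt{\log(e+s)}\;(e+s)^{-K^2/4}.
\]
The last step uses $e^{-K^2\log(e+s)/4}=(e+s)^{-K^2/4}$. The factor $1+K\sqrt{L}$ is kept: at $K=0$ we simply use the lower bound $c$.

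Next we integrate from $s_*$ to $S_x=2|\log x|+O(1)$ and split into two cases.
\begin{itemize}
\item If $K<2$, the exponent $1-K^2/4$ is positive. Discarding the $\sqrt{\log}$ gain, the integral is at least $c\,S_x^{1-K^2/4}-C\ge c|\log x|^{1-K^2/4}-C$.
\item If $K=2$, the integrand is $\gtrsim \sqrt{\log(e+s)}/(e+s)$. Its integral up to $S_x$ is $\tfrac23(\log S_x)^{3/2}-C$, and since $\log S_x=\log|\log x|+O(1)$ this gives $c(\log|\log x|)^{3/2}-C$.
\end{itemize}
Inserting either bound into \eqref{eq:scalar-general-est} and absorbing $e^{cC}$ into the constant $C$ yields \eqref{eq:scalar-K-decay}.

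The only delicate point is the endpoint $K=2$. There the gain from $1+a$, namely the $\sqrt{\ell}$ factor, is exactly what produces the power $3/2$ instead of $1$. So we must not crudely bound $1+a\ge1$ in that case, and we must check that the $O(1)$ difference between $\ell_{h_0}(h)$ and $\log(e+s)$ only affects constants. That holds because the difference enters through $e^{-K^2O(1)/4}$, a bounded multiplicative factor.
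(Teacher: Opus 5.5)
Your proof is correct and follows essentially the paper's route: you bound $a_\sigma(s)\le K\sqrt{\log(e+s)}$, use that $(1+a)e^{-a^2/4}$ is bounded below for bounded $a$ and decreasing beyond $a=1$, and integrate, keeping the $\sqrt{\log(e+s)}$ gain from the factor $1+a$ at $K=2$ to produce the power $3/2$. The differences are cosmetic: you insert the bound into the exponent of \cref{thm:scalar-general} rather than into the differential inequality for $E$, and the $O(1)$ bookkeeping you mention is unnecessary because $\ell_{h_0}(h_0e^{-s})=\log(e+s)$ holds exactly.
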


\begin{proof}
The hypothesis gives
\[
 a_\sigma(s)\le K\sqrt{\log(e+s)}.
\]
For $K<2$, \eqref{eq:Edecay} may be weakened to
\[
 E'(s)\le-c(e+s)^{-K^2/4}E(s),
\]
and integration gives the first line of \eqref{eq:scalar-K-decay}.  If $K=2$, retain the factor $1+a$ in \eqref{eq:Edecay}.  Put $L=\log(e+s)$.  On $0\le a\le2\sqrt L$, the function $(1+a)e^{-a^2/4}$ is bounded below either by a fixed positive constant (when $a$ stays in a bounded interval) or, on its decreasing branch, by its endpoint value.  Hence for large $s$,
\[
 (1+a_\sigma(s))e^{-a_\sigma(s)^2/4}
 \ge c\frac{\sqrt{\log(e+s)}}{e+s},
\]
so
\[
 E(s)\le CM^2\exp\{-c(\log(e+s))^{3/2}\}.
\]
The pointwise step in \cref{thm:scalar-general} gives the second line.
\end{proof}

\begin{remark}
The inclusive endpoint $K=2$ is the classical Petrovskii threshold for the corresponding moving-boundary heat geometry.  The full Kolmogorov--Petrovskii regularity test contains finer lower-order information at the endpoint; see \cite{Abdulla2005}.  We use only the quantitative sufficient form needed for the circulation comparison.
\end{remark}

\begin{proposition}\label{prop:terminal-no-go}
A displacement condition imposed only on windows ending at one final time cannot replace the sliding hypothesis.  More precisely, after normalizing $\nu=1$ and $T=1$, for every $K>0$ there exist a nonnegative $g\in C^\infty([0,1))$ and a bounded nonnegative solution of
\[
 q_t=q_{rr}+g(t)q_r,\qquad q(0,t)=0,
\]
with smooth bounded nondecreasing initial data, such that
\begin{equation}\label{eq:terminal-only-wall}
 \int_{1-h}^{1}g(t)\,\dd t
 \le K\sqrt{h\log\log(e^e/h)}
\end{equation}
for every sufficiently small $h$, but there are $t_n\uparrow1$ and $r_n\downarrow0$ for which
\begin{equation}\label{eq:terminal-no-modulus}
 q(r_n,t_n)|\log r_n|^{3/2}\longrightarrow\infty.
\end{equation}
In particular, the terminal-only bound \eqref{eq:terminal-only-wall} does not imply a uniform Wei-type logarithmic modulus on preterminal times.
\end{proposition}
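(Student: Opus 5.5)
The plan is to build $g$ as a train of very short, very tall smooth pulses accumulating at $t=1$. The pulses are chosen so that their total displacement in every terminal window is small, while each individual pulse acts on the solution almost like a rigid translation toward the Dirichlet wall. Fix $K>0$. Set $t_n=1-2^{-n}$, choose displacements $\delta_n=c_K2^{-n/2}$ with $c_K$ small, and choose widths $0<\varepsilon_n\ll\delta_n^2$ to be fixed later, extremely small. Take $g=\sum_n g_n$, where $g_n\ge0$ is a $C^\infty$ bump supported in $[t_n,t_n+\varepsilon_n]$ with $\int g_n=\delta_n$. The supports are disjoint and accumulate only at $1$, so $g\in C^\infty([0,1))$. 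For $2^{-n-1}<h\le2^{-n}$, the window $[1-h,1]$ meets only pulses with index $k\ge n$. Hence
\[
\int_{1-h}^1 g\le\sum_{k\ge n}\delta_k\le C c_K2^{-n/2}\le Cc_K\sqrt{2h}\le K\sqrt{h\log\log(e^e/h)}
\]
once $c_K$ is small. This gives \eqref{eq:terminal-only-wall}. The widths $\varepsilon_n$ do not enter this bound, and that freedom is the whole mechanism.

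Next I record two lower bounds that hold independently of $g$. Take initial data $q_0$ smooth, bounded, nondecreasing, with $q_0(0)=0$ and $q_0\not\equiv0$. The maximum principle gives $0\le q\le\sup q_0$. Differentiating, $p=q_r$ satisfies $p_t=p_{rr}+g\,p_r$, and at the wall $p(0,t)\ge0$ because $q\ge0$ and $q(0,t)=0$; so monotonicity in $r$ is preserved and $q_r\ge0$. Then $g\,q_r\ge0$, so $q$ is a supersolution of the Dirichlet heat equation on the half-line. Comparison with the driftless Dirichlet heat flow from $q_0$ gives
\[
q(r,t)\ge c_0\min(r,1)\qquad\text{for } t\in[1/2,1),
\]
with $c_0>0$ independent of the pulses. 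In particular $q(\delta_n,t_n)\ge c_0\delta_n$.

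The key step is the pulse-crossing estimate. On $[t_n,t_n+\varepsilon_n]$ pass to the moving frame $z=r+A(t)$, as in the proof of \cref{thm:scalar-general}. There $v$ solves the pure heat equation on $z>A(t)$, with the Dirichlet wall sweeping a distance $\delta_n$ in time $\varepsilon_n$. Over time $\varepsilon_n$ the heat flow moves mass only a distance of order $\sqrt{\varepsilon_n}$, and the wall perturbs $v$ only in a layer of that thickness. Set $r_n:=\sqrt{\varepsilon_n}\,|\log\varepsilon_n|$. I expect to show, using a barrier built from the Gaussian kernel together with the uniform bound $0\le q\le M$ and $q_r\ge0$, that
\[
q(r_n,t_n+\varepsilon_n)\ \ge\ q(\delta_n,t_n)-o(1)\,\delta_n\ \ge\ \tfrac{c_0}{2}\delta_n .
\]
This is the main obstacle. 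It needs a lower estimate for the heat equation with a fast-moving Dirichlet boundary, valid just outside a boundary layer of width about $\sqrt{\varepsilon_n}\log(1/\varepsilon_n)$. I would first try the explicit subsolution given by $q(\cdot,t_n)$ shifted by $\delta_n$ and multiplied by a half-line cutoff $1-\operatorname{erfc}$-type factor adapted to the wall's position. If that is awkward, I would fall back on comparing with the problem in which the wall jumps instantaneously and then stays fixed.

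Given this bound, set $t_n'=t_n+\varepsilon_n$. Then
\[
q(r_n,t_n')\,|\log r_n|^{3/2}\gtrsim \delta_n|\log\varepsilon_n|^{3/2}.
\]
Choosing for instance $\varepsilon_n=\exp(-2^{n})$ makes $\varepsilon_n\ll\delta_n^2$ and makes the right side tend to infinity, while $t_n'\uparrow1$ and $r_n\downarrow0$. This gives \eqref{eq:terminal-no-modulus}, and hence the absence of any uniform Wei-type modulus at preterminal times.
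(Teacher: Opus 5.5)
Your proposal is correct and has the same architecture as the paper's proof. Both use a train of thin pulses whose total mass on terminal windows is small, the baseline $q\ge c_0 r$ obtained from $q_r\ge0$ and comparison with the driftless Dirichlet flow, and a crossing estimate showing that a pulse of mass $\delta_n$ and duration $\varepsilon_n$ leaves a value of order $\delta_n$ at distance of order $\sqrt{\varepsilon_n}$ from the wall.

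The step you flag as the main obstacle is short, and your fallback already does it. In the moving frame put $b=A(t_n+\varepsilon_n)=A(t_n)+\delta_n$. Since $A(t)\le b$ during the pulse, the fixed half-line $\{z>b\}$ lies inside the moving domain. On it, $v(\cdot,t_n)\ge q(\delta_n,t_n)$ by monotonicity, and $v\ge0$ on $z=b$. Comparison with the explicit Dirichlet solution on $\{z>b\}$ then gives
\[
 q(r,t_n+\varepsilon_n)\ \ge\ q(\delta_n,t_n)\,\mathrm{erf}\bigl(r/(2\sqrt{\varepsilon_n})\bigr).
\]
At $r=r_n$ this is $(1-o(1))\,c_0\delta_n$; in fact $r_n=C\sqrt{\varepsilon_n}$ would already suffice.

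The paper proves the same crossing bound probabilistically. It uses the Feynman--Kac representation of the killed diffusion, with $r_n=4\sqrt{\varepsilon_n}$ and a Brownian event of fixed positive probability. Your barrier argument is more elementary and avoids that machinery.

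The parameter choices also differ. The paper places pulses at $\tau_n=e^{-2^n}$ with $\delta_n=\frac K8\sqrt{\tau_n\log\log(e^e/\tau_n)}$ and $\varepsilon_n=\exp(-\delta_n^{-4})$, so it saturates the log-log wall. Your dyadic pulses, with $\delta_n=c_K2^{-n/2}$ and $\varepsilon_n=\exp(-2^n)$, satisfy even the terminal-only type-I bound $\int_{1-h}^1 g\lesssim\sqrt h$. They still give $q(r_n,t_n+\varepsilon_n)\,|\log r_n|^{3/2}\gtrsim 2^n$, so your version proves a slightly stronger no-go statement.
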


\begin{proof}
Set
\[
 W(h)=\sqrt{h\log\log(e^e/h)}.
\]
Choose $n_0$ large and
\[
 \tau_n=e^{-2^n},\qquad t_n=1-\tau_n,\qquad
 \delta_n=\frac K8 W(\tau_n),\qquad n\ge n_0.
\]
Since $\tau_{n+1}=\tau_n^2$, after increasing $n_0$ we may assume
\begin{equation}\label{eq:deltatail}
 \delta_{n+1}\le\frac14\delta_n,
 \qquad
 \sum_{j\ge n}\delta_j\le\frac43\delta_n.
\end{equation}
Let
\[
 \varepsilon_n=\exp(-\delta_n^{-4}).
\]
Then $\varepsilon_n$ is much smaller than both $\tau_{n+1}$ and the separation $t_{n+1}-t_n$ for large $n$.  Fix $\eta\in C_c^\infty(0,1)$, $\eta\ge0$, $\int_0^1\eta=1$, and put
\[
 g_n(t)=\frac{\delta_n}{\varepsilon_n}
 \eta\!\left(\frac{t-(t_n-\varepsilon_n)}{\varepsilon_n}\right),
 \qquad
 g=\sum_{n\ge n_0}g_n.
\]
The supports are disjoint and locally finite in $[0,1)$, so $g$ is smooth there and each pulse has mass $\int g_n=\delta_n$.

The function $W$ is increasing for all sufficiently small arguments.  Given small $h$, choose $n$ so that $\tau_n\le h<\tau_{n-1}$.  The terminal interval $(1-h,1)$ meets only a part of the $n$th pulse and the full later pulses.  Hence, by \eqref{eq:deltatail},
\[
 \int_{1-h}^1g
 \le\sum_{j\ge n}\delta_j
 \le\frac43\frac K8W(\tau_n)
 \le K W(h),
\]
which proves \eqref{eq:terminal-only-wall}.

It remains to show that each short pulse creates a much thinner preterminal layer than the terminal accounting sees.  Choose smooth bounded increasing initial data
\[
 q_0(r)=\frac{2}{\sqrt\pi}\int_0^r e^{-y^2}\,\dd y.
\]
For each $h>0$, the difference $q(r+h,t)-q(r,t)$ solves the same linear equation on $r>0$ with nonnegative initial data and nonnegative boundary value at $r=0$; the maximum principle therefore gives $q(r+h,t)\ge q(r,t)$.  Hence $q_r\ge0$.  Therefore
\[
 q_t-q_{rr}=gq_r\ge0,
\]
and comparison with the drift-free Dirichlet heat solution $q^{(0)}$ gives $q\ge q^{(0)}$.  By the explicit Dirichlet heat kernel, $\partial_rq^{(0)}(0,t)>0$ for every $t>0$; continuity on the compact interval $[1/2,1]$ yields constants $c_0,r_0>0$ such that
\begin{equation}\label{eq:baseline-linear}
 q(r,t)\ge q^{(0)}(r,t)\ge c_0r,
 \qquad 0<r<r_0,\quad \frac12<t<1.
\end{equation}

Consider the $n$th pulse and set $s_n=t_n-\varepsilon_n$ and
\[
 r_n=4\sqrt{\varepsilon_n}.
\]
During $[s_n,t_n]$ the killed diffusion representation for the time-dependent generator $\partial_{rr}+g(t)\partial_r$ is
\[
 q(r_n,t_n)=
 \mathbb E\left[q(X_{\varepsilon_n},s_n)
 \mathbf 1_{\{\tau_0>\varepsilon_n\}}\right],
\]
where
\[
 X_s=r_n+\int_0^s g(t_n-\xi)\,\dd\xi+\sqrt2 B_s
\]
and $\tau_0$ is the first hitting time of $0$.  The coefficient is read backward from the observation time $t_n$ because this is the Feynman--Kac representation of the forward initial-value problem from $s_n$ to $t_n$.  The deterministic drift is nevertheless nonnegative and has total increment
$\int_0^{\varepsilon_n}g(t_n-\xi)\,\dd\xi=\delta_n$.  Since $\delta_n/\sqrt{\varepsilon_n}\to\infty$, consider the event
\[
 \inf_{0\le s\le\varepsilon_n}\sqrt2B_s\ge-2\sqrt{\varepsilon_n},
 \qquad
 |\sqrt2B_{\varepsilon_n}|\le\frac14\delta_n.
\]
By Brownian scaling and the reflection principle, the first condition has a fixed positive probability, while the probability of the second tends to one.  Their intersection therefore has probability at least a constant $p_0>0$ for all large $n$.  On this event the path remains positive because $r_n=4\sqrt{\varepsilon_n}$ and the deterministic drift is nonnegative.  Moreover, since $4\sqrt{\varepsilon_n}\le\delta_n/4$ for large $n$ and the total deterministic increment equals $\delta_n$,
\[
 \frac12\delta_n\le X_{\varepsilon_n}\le2\delta_n.
\]
For large $n$ this interval lies in $(0,r_0)$.  Applying \eqref{eq:baseline-linear} at time $s_n$ therefore gives
\begin{equation}\label{eq:pulse-lower}
 q(r_n,t_n)\ge c_1\delta_n
\end{equation}
with $c_1>0$ independent of $n$.

Finally,
\[
 |\log r_n|=\frac12\delta_n^{-4}+O(1),
\]
so \eqref{eq:pulse-lower} implies
\[
 q(r_n,t_n)|\log r_n|^{3/2}
 \ge c\delta_n\,\delta_n^{-6}
 =c\delta_n^{-5}\longrightarrow\infty.
\]
This proves \eqref{eq:terminal-no-modulus}.  The normalization $\nu=1$, $T=1$ is removed by the usual parabolic scaling.
\end{proof}

\section{Sliding Petrovskii regularity}

We now return to the circulation equation \eqref{eq:Gamma}.

\begin{lemma}\label{lem:comparison}
Let $u$ be smooth and axisymmetric on $[0,T)$.  Set $m(t)=\norm{(u^r)_-(t)}_\infty$.  There exists a bounded nonnegative nondecreasing function $q=q(r,t)$ satisfying
\begin{equation}\label{eq:qcomp}
 q_t=\nu q_{rr}+m(t)q_r,
 \qquad q(0,t)=0,
\end{equation}
and
\begin{equation}\label{eq:GammaComp}
 |\Gam(r,z,t)|\le q(r,t)
\end{equation}
for all $r,z,t$.  Moreover $\norm{q}_\infty\le\norm{\Gam_0}_\infty$ after choosing the initial envelope optimally.
\end{lemma}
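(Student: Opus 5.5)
We construct $q$ from an initial envelope and then compare it with $|\Gamma|$ by the maximum principle applied to the circulation equation \eqref{eq:Gamma}. For the initial datum take the radial envelope
\[
 q_0(r)=\sup_{0<\rho\le r}\ \sup_{z\in\R}|\Gamma_0(\rho,z)|,
\]
which is nonnegative, nondecreasing, bounded by $\norm{\Gamma_0}_\infty$, and vanishes at $r=0$ because $\Gamma_0=O(r^2)$. If needed, we replace $q_0$ by a slightly larger smooth nondecreasing majorant; letting the approximation parameter tend to zero yields the optimal constant. Let $q$ solve \eqref{eq:qcomp} on the half-line with this datum. The solution exists and is unique among bounded solutions, for instance through the change of variables $z=r+\int_0^t m$ of \cref{thm:scalar-general}, which turns \eqref{eq:qcomp} into a heat equation with moving Dirichlet boundary. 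Bounds $0\le q\le\norm{\Gamma_0}_\infty$ follow from the maximum principle. Monotonicity is proved exactly as in \cref{prop:terminal-no-go}: for $h>0$ the difference $q(r+h,t)-q(r,t)$ solves the same linear equation, has nonnegative initial data, and has boundary value $q(h,t)\ge0$ at $r=0$, so it stays nonnegative. Hence $q_r\ge0$.

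The comparison is the core of the argument. Put $\Psi=q(r,t)-\Gamma(r,z,t)$. Equation \eqref{eq:Gamma} gives
\[
 \partial_t\Gamma+u^r\partial_r\Gamma+u^z\partial_z\Gamma-\nu\Bigl(\partial_{rr}-\frac1r\partial_r+\partial_{zz}\Bigr)\Gamma=0.
\]
Since $q$ is independent of $z$ and $q_r\ge0$,
\[
 \partial_t q+u^r q_r+u^zq_z-\nu\Bigl(q_{rr}-\frac1r q_r+q_{zz}\Bigr)
 =(m+u^r)q_r+\frac{\nu}{r}q_r\ \ge0,
\]
because $u^r\ge-(u^r)_-\ge-m$. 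The sign of the $-\frac1r\partial_r$ term is what makes this work, since it is favorable precisely when $q$ is nondecreasing. So $q$ is a supersolution of the $\Gamma$ operator and $\Gamma$ is a solution. It follows that $\Psi$ satisfies $L\Psi\ge0$, with $\Psi(\cdot,0)\ge0$ and $\Psi=0$ on the axis (both $q$ and $\Gamma$ vanish there). The same argument applied to $q+\Gamma$ gives the bound from below, and together these yield \eqref{eq:GammaComp}.

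The main obstacle is justifying the maximum principle on the unbounded domain $\{r>0,\ z\in\R\}$, where the operator has the singular coefficient $-\nu/r$ at the axis and $q$ may be only modestly regular near $r=0$ at $t=0$. We handle this in three steps.
\begin{enumerate}[label=\rm(\roman*)]
\item Work on a compact preterminal interval $[0,T']$, where $u$ is smooth and bounded, so the drift coefficients are bounded away from the axis.
\item Treat the axis as a Dirichlet boundary. Apply the weak maximum principle on $\{r>\rho\}$ to $\Psi+\varepsilon(1+r^2+z^2)e^{Ct}$, with $C$ chosen so that this is a strict supersolution using the boundedness of $u$. This rules out negative minima at infinity. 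On $r=\rho$ the boundary value of $\Psi$ is at least $-o(1)$ as $\rho\to0$, by uniform continuity of $q$ and $\Gamma$ up to the axis on $[0,T']$.
\item Let $\rho\to0$, then $\varepsilon\to0$.
\end{enumerate}
An alternative is to view $\Gamma$ in $\R^5$-type variables, in which the axis is not a boundary. We expect the direct cutoff argument of steps (i)–(iii) to suffice. A nonsmooth initial envelope is handled by the smooth majorant approximation above, with parabolic smoothing of $q$ for $t>0$.
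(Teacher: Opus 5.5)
Your proposal is correct and follows essentially the paper's proof. It uses the same monotone envelope $\sup_{\rho\le r,\,z}|\Gamma_0|$ with smooth majorant approximation, the same supersolution computation $\mathcal L q=(m+u^r+\nu/r)q_r\ge0$, and the maximum principle applied to $q\mp\Gamma$ after excising $\{r\le\rho\}$. Your quadratic barrier $\varepsilon(1+r^2+z^2)e^{Ct}$ is a cleaner treatment of the unbounded region than the paper's appeal to decay on truncated cylinders, since it needs only the linear growth bound $|\Gamma|\le r\|u\|_\infty$.

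One small correction: justify $q_0(0^+)=0$ by $|\Gamma_0|\le r\|u_0^\theta\|_\infty$, which follows from $H^2\hookrightarrow L^\infty$. The bound $\Gamma_0=O(r^2)$ need not hold uniformly in $z$ for $H^2$ data.
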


\begin{proof}
Because $u_0\in H^2(\R^3)$, Sobolev embedding gives $u_0^\theta\in L^\infty$.  Hence
$|\Gam(r,z,0)|\le r\|u_0^\theta\|_\infty$ near the axis, uniformly in $z$.  Define the monotone envelope
\[
 q_0^*(r)=\sup_{0\le\rho\le r,\ z\in\R}|\Gam(\rho,z,0)|.
\]
Then $q_0^*(0)=0$, $q_0^*$ is nonnegative and nondecreasing, and
$q_0^*\le\|\Gam_0\|_\infty$.  Approximate $q_0^*$ monotonically from above by smooth nondecreasing envelopes and pass to the limit in the one-dimensional parabolic problem.  The resulting bounded solution $q$ of \eqref{eq:qcomp} is nonnegative and nondecreasing in $r$, and the maximum principle gives
$\|q\|_\infty\le\|\Gam_0\|_\infty$.

Define the axisymmetric circulation operator
\[
 \mathcal L f
 =f_t-\nu\left(f_{rr}-\frac1r f_r+f_{zz}\right)
 +u^rf_r+u^zf_z.
\]
Equation \eqref{eq:Gamma} says $\mathcal L\Gam=0$.  Since $q$ is independent of $z$ and satisfies \eqref{eq:qcomp},
\[
 \mathcal L q
 =\left(m(t)+u^r+\frac\nu r\right)q_r\ge0,
\]
because $m(t)+u^r\ge0$ and $q_r\ge0$.  Thus $q$ is a supersolution.  Apply the maximum principle first on truncated cylinders
$\{\varepsilon<r<R,\ |z|<R\}$, using the axis compatibility $\Gam(0,z,t)=q(0,t)=0$ and the decay of the strong solution to control the artificial outer boundary, and then let $\varepsilon\downarrow0$ and $R\uparrow\infty$.  Applying the argument to both $\Gam-q$ and $-\Gam-q$ yields \eqref{eq:GammaComp}.
\end{proof}

\begin{corollary}\label{thm:petro-integral}
Let $u$ be a smooth axisymmetric solution of \eqref{eq:NS} on $[0,T)$ with $u_0\in H^2(\R^3)$ smooth, finite energy, and $\Gamma_0\in L^\infty$.  Fix $0<h_0<T/2$ sufficiently small and, for $\sigma\in(T-h_0,T)$, define
\[
 a_\sigma(s)=
 \frac{D_\sigma(h_0e^{-s})}{\sqrt{\nu h_0e^{-s}}},
 \qquad
 \mathfrak P_\sigma(S)=
 \int_0^S(1+a_\sigma(s))e^{-a_\sigma(s)^2/4}\,\dd s.
\]
If
\begin{equation}\label{eq:petro-integral-NS}
 \lim_{S\to\infty}
 \inf_{\sigma\in(T-h_0,T)}
 \frac{\mathfrak P_\sigma(S)}{\log(e+S)}=\infty,
\end{equation}
then $u$ extends smoothly through $T$.
\end{corollary}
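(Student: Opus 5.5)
The plan has three steps: compare the circulation with a one-dimensional barrier, feed the barrier into the scalar moving-boundary decay estimate, and close with a known circulation criterion near the axis.

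\textbf{Step 1: the barrier.} By \cref{lem:comparison}, $|\Gam(r,z,t)|\le q(r,t)$, where $q$ solves \eqref{eq:qcomp} with drift $g=m\ge0$, Dirichlet condition at $r=0$, and $0\le q\le M:=\|\Gam_0\|_\infty$. This is exactly the scalar problem \eqref{eq:scalar}. Moreover $A(\sigma)-A(\sigma-h)=D_\sigma(h)$ as defined in \eqref{eq:mDintro}. Hence $a_\sigma(s)$ and $\mathfrak P_\sigma$ in the statement coincide with the quantities of \cref{thm:scalar-general}.

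\textbf{Step 2: uniform decay near the axis.} Applying \eqref{eq:scalar-general-est} at every $\sigma\in(T-h_0,T)$ gives
\[
 \sup_z|\Gam(x,z,\sigma)|\le CM\exp\{-c\,\mathfrak P_\sigma(S_x)\},\qquad S_x=\log\frac{\nu h_0}{4x^2}.
\]
The constants $c,C$ are universal, so they do not depend on $\sigma$. Hypothesis \eqref{eq:petro-integral-NS} says that for every $\Lambda>0$ there is $S_\Lambda$ with $\mathfrak P_\sigma(S)\ge\Lambda\log(e+S)$ for all $S\ge S_\Lambda$, uniformly in $\sigma$. Since $S_x\simeq2|\log x|$, this yields, uniformly for $\sigma\in(T-h_0,T)$,
\[
 \sup_z|\Gam(x,z,\sigma)|\le C_\Lambda M\,|\log x|^{-c\Lambda}\qquad (0<x<x_\Lambda).
\]
Because $\Lambda$ is arbitrary, $\Gam$ decays near the axis faster than any power of $|\log r|^{-1}$, uniformly on the terminal window. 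For times in $[0,T-h_0]$ the solution is smooth, so $\Gam=O(r^2)$ there and no further work is needed.

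\textbf{Step 3: invoking a circulation criterion.} Choosing $\Lambda$ with $c\Lambda$ larger than the exponent in Wei's logarithmic regularity threshold \cite{Wei2016} (of the form $|\Gam|\le C|\log r|^{-\gamma}$ for $r\le r_0$, uniformly in $t<T$, with a suitable $\gamma$), that criterion gives smooth continuation through $T$.

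\textbf{Main obstacle.} The delicate points are bookkeeping rather than new estimates. First, the decay must hold uniformly in the terminal time $\sigma$, which is why the $\inf_\sigma$ sits inside the limit in \eqref{eq:petro-integral-NS}; I will check that the pointwise step of \cref{thm:scalar-general} only needs $\tau_0=4x^2/\nu<h_0$, i.e. $x$ small independently of $\sigma$. Second, I must match the exact form of Wei's hypothesis: a sup-in-$z$, all-times bound of logarithmic type near the axis. If Wei's threshold instead involves a small constant rather than a power, any super-logarithmic decay still suffices once $x$ is small. The statement only requires the hypothesis on $(T-h_0,T)$, and $\sigma\mapsto D_\sigma$ being defined needs $\sigma-h_0>0$, which holds since $h_0<T/2$.
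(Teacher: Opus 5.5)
Your proposal is correct and follows the paper's proof essentially step for step. The steps are: comparison via \cref{lem:comparison}, the uniform-in-$\sigma$ bound $|\Gam(r,z,\sigma)|\le C\|\Gam_0\|_\infty e^{-c\mathfrak P_\sigma(S_r)}$ from \cref{thm:scalar-general}, conversion of \eqref{eq:petro-integral-NS} into decay faster than every power of $|\log r|^{-1}$, and Wei's criterion with exponent $3/2$. The only difference is that the paper restarts at some $t_1\in(T-h_0,T)$, whereas you cover $[0,T-h_0]$ by smoothness; there the bound $|\Gam|\le r\sup_{t\le T-h_0}\|u(t)\|_\infty$ already suffices, so you need not claim $O(r^2)$.
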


\begin{proof}
Apply \cref{lem:comparison} and then \cref{thm:scalar-general} with terminal time equal to each $\sigma\in(T-h_0,T)$.  The $L^\infty$ norm of the comparison function is uniformly bounded by $\|\Gamma_0\|_\infty$.  If $r$ is small and
\[
 S_r=\log\frac{\nu h_0}{4r^2},
\]
then
\[
 |\Gamma(r,z,\sigma)|
 \le C\|\Gamma_0\|_\infty e^{-c\mathfrak P_\sigma(S_r)}.
\]
Condition \eqref{eq:petro-integral-NS} implies that the right side is, uniformly in late $\sigma$, smaller than $|\log r|^{-N}$ for every fixed $N>0$ once $r$ is sufficiently small.  Choose any restart time $t_1\in(T-h_0,T)$.  Taking $N=3/2$ and shrinking the axis radius if necessary gives Wei's circulation hypothesis uniformly on $[t_1,T)$.  The shifted solution has $H^2$ data at $t_1$ and bounded circulation, so Wei's criterion \cite{Wei2016} applies on the shifted interval and yields continuation through $T$.
\end{proof}

\begin{theorem}\label{thm:petro-NS}
Let $u$ be a smooth axisymmetric solution of \eqref{eq:NS} on $[0,T)$ with $u_0\in H^2(\R^3)$ smooth, finite energy, and $\Gam_0\in L^\infty$.  Assume that for some $0<h_0<T/2$,
\begin{equation}\label{eq:petroNS}
 \int_{\sigma-h}^{\sigma}\norm{(u^r)_-(t)}_\infty\,\dd t
 \le2\sqrt{\nu h\ell_{h_0}(h)}
\end{equation}
for every $\sigma\in(T-h_0,T)$ and every $0<h<\min\{h_0,\sigma\}$.  Then $u$ extends smoothly through $T$.
\end{theorem}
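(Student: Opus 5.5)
The plan is to reduce \cref{thm:petro-NS} to \cref{thm:petro-integral}. That is, show that the sliding hypothesis \eqref{eq:petroNS} forces the divergence condition \eqref{eq:petro-integral-NS}, uniformly in $\sigma\in(T-h_0,T)$.

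First I translate the hypothesis into a bound on the profile $a_\sigma$. With $h=h_0e^{-s}$ we have $h_0/h=e^s$, so
\[
 \ell_{h_0}(h)=\log\log(e^{e}e^{s})=\log(e+s).
\]
Condition \eqref{eq:petroNS} therefore gives
\[
 a_\sigma(s)\le 2\sqrt{\log(e+s)}
\]
for all $s$ with $h_0e^{-s}<\min\{h_0,\sigma\}$. Since $\sigma>T-h_0>h_0$, this range is all $s>0$. No constant depends on $\sigma$, so the bound is uniform over the sliding family.

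Next I bound the integrand of $\mathfrak P_\sigma$ from below, as in the $K=2$ case of \cref{cor:scalar-K}. The function $\phi(a)=(1+a)e^{-a^2/4}$ is positive, increases on $[0,a_*]$ and decreases afterwards. Hence on $[0,2\sqrt L]$, with $L=\log(e+s)$, its minimum is
\[
 \min\{\phi(0),\phi(2\sqrt L)\}\ \ge\ c\,\frac{1+2\sqrt{L}}{e+s}\ \ge\ c\,\frac{\sqrt{\log(e+s)}}{e+s}
\]
for a universal $c>0$. Integrating gives, uniformly in $\sigma$,
\[
 \mathfrak P_\sigma(S)\ \ge\ c\int_0^S\frac{\sqrt{\log(e+s)}}{e+s}\,\dd s=\tfrac{2c}{3}\bigl[(\log(e+S))^{3/2}-1\bigr].
\]
Dividing by $\log(e+S)$, the infimum over $\sigma$ grows like $(\log(e+S))^{1/2}\to\infty$. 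This is exactly \eqref{eq:petro-integral-NS}, and \cref{thm:petro-integral} gives smooth continuation through $T$.

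The main obstacle is bookkeeping: the hypotheses of \cref{thm:petro-integral} must match those here. It requires $h_0$ "sufficiently small", whereas \cref{thm:petro-NS} allows any $h_0<T/2$. I would handle this by shrinking to some $h_1\le h_0$. Restricting windows to $h<h_1$ keeps the displacement bound. Replacing $\ell_{h_0}$ by $\ell_{h_1}$ changes $\log(e+s)$ only by a shift $s\mapsto s+\log(h_0/h_1)$, which alters $a_\sigma$ by a bounded reparametrization. This changes $\mathfrak P_\sigma$ by a bounded additive amount, as noted after \cref{thm:scalar-general}. Since the growth is $(\log S)^{3/2}$ against the required $\log S$, this margin absorbs all such constants. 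The endpoint coefficient $2$ is the only place where sharpness matters, and it is exactly what produces the $\sqrt{L}$ gain.
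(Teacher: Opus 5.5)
Your argument is correct and is essentially the paper's proof. The paper applies \cref{lem:comparison} and the $K=2$ case of \cref{cor:scalar-K}, whose proof is exactly your lower bound $(1+a)e^{-a^2/4}\ge c\sqrt{\log(e+s)}/(e+s)$ on $0\le a\le 2\sqrt{\log(e+s)}$, to obtain the modulus $\exp\{-c(\log|\log r|)^{3/2}\}$ and then invokes Wei's criterion; you instead feed the same lower bound into \cref{thm:petro-integral}, which packages the identical comparison, scalar-estimate and Wei chain, and your handling of the range $s>0$ (via $\sigma>T-h_0>h_0$) and of the optional shrinking of $h_0$ (a shift $s\mapsto s+\log(h_0/h_1)$ in $a_\sigma$) is sound.
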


\begin{proof}
Apply \cref{lem:comparison} and then \cref{cor:scalar-K} with $K=2$ and terminal time equal to each $\sigma\in(T-h_0,T)$.  The constants in the scalar estimate are uniform because the sliding hypothesis and $\norm{q}_\infty\le\norm{\Gam_0}_\infty$ are uniform.  Thus there exist $c,C>0$ such that, for all sufficiently small $r$ and all $\sigma$ sufficiently close to $T$,
\begin{equation}\label{eq:GammaEndpointMod}
 |\Gam(r,z,\sigma)|
 \le C\norm{\Gam_0}_\infty
 \exp\{-c(\log|\log r|)^{3/2}\}.
\end{equation}
For every fixed $N>0$, the right side of \eqref{eq:GammaEndpointMod} is eventually smaller than $|\log r|^{-N}$ as $r\downarrow0$.  Choose a restart time $t_1\in(T-h_0,T)$.  After shrinking the axis cylinder if necessary,
\[
 |\Gam(r,z,t)|\le |\log r|^{-3/2}
\]
uniformly for $t\in[t_1,T)$.  The solution at $t_1$ is again an $H^2$ axisymmetric strong datum and its circulation is bounded by the maximum principle.  Wei's regularity criterion \cite{Wei2016}, applied to this shifted solution, excludes singularity at $T$ and gives continuation.
\end{proof}

\begin{corollary}\label{cor:petro-secondary}
Let the hypotheses of \cref{thm:petro-NS} hold except for \eqref{eq:petroNS}.  Suppose that there exist constants $\beta<1/2$, $C_0\in\R$, and $0<h_0<T/2$ such that, for every $\sigma\in(T-h_0,T)$ and every sufficiently small $0<h<h_0$,
\begin{equation}\label{eq:petro-secondary}
 \frac{D_\sigma(h)^2}{4\nu h}
 \le
 \ell_{h_0}(h)
 +\beta\log\!\bigl(e+\ell_{h_0}(h)\bigr)+C_0.
\end{equation}
Then $u$ extends smoothly through $T$.
\end{corollary}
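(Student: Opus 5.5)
The corollary will follow from \cref{thm:petro-integral}: it suffices to show that \eqref{eq:petro-secondary} makes $\mathfrak P_\sigma(S)/\log(e+S)\to\infty$ uniformly in $\sigma$.

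First, translate the hypothesis into the logarithmic time variable. With $h=h_0e^{-s}$ we have $a_\sigma(s)^2=D_\sigma(h)^2/(\nu h)$ and
\[
 \ell_{h_0}(h)=\log\log(e^e e^{s})=\log(e+s).
\]
So \eqref{eq:petro-secondary} reads
\[
 \frac{a_\sigma(s)^2}{4}\le L(s)+\beta\log(e+L(s))+C_0,
 \qquad L(s)=\log(e+s),
\]
for all $s\ge s_*$, with $s_*$ independent of $\sigma$. For $s<s_*$ the integrand is nonnegative and can be ignored.

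Second, bound the integrand $(1+a)e^{-a^2/4}$ from below. The function is bounded below by a positive constant on bounded $a$ and is decreasing for $a\ge1$. It is therefore at least its value at the largest admissible $a$, namely $a_{\max}=2\sqrt{L+\beta\log(e+L)+C_0}$. The exponential factor gives
\[
 e^{-a_{\max}^2/4}=e^{-C_0}\,(e+s)^{-1}(e+L)^{-\beta},
\]
and the prefactor satisfies $1+a_{\max}\ge c\sqrt{L}$. Hence for large $s$
\[
 (1+a_\sigma(s))e^{-a_\sigma(s)^2/4}\ \ge\ c\,\frac{L(s)^{1/2-\beta}}{e+s},
\]
with $c$ depending only on $\beta$ and $C_0$. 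If instead $a_\sigma(s)$ is small, the integrand is bounded below by a constant, which is even better.

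Third, integrate. Substituting $L=\log(e+s)$ gives
\[
 \mathfrak P_\sigma(S)\ge c\int^{\log(e+S)}L^{1/2-\beta}\,\dd L\ \gtrsim\ \log(e+S)^{3/2-\beta}-C,
\]
uniformly in $\sigma\in(T-h_0,T)$. Since $\beta<1/2$, the exponent $3/2-\beta$ exceeds $1$, so $\mathfrak P_\sigma(S)/\log(e+S)\to\infty$ uniformly. Then \eqref{eq:petro-integral-NS} holds and \cref{thm:petro-integral} gives continuation.

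The main obstacle is the bookkeeping in the second step. The decreasing-branch argument must be made uniform, and the constant $C_0$, which may be negative or positive, must only affect multiplicative constants. The hypothesis holds only for sufficiently small $h$; we need this threshold to be uniform in $\sigma$, or else we shrink $h_0$, which by \cref{thm:scalar-general} changes only constants. One should also check that the requirement "every sufficiently small $h$" matches the $s\ge s_*$ range used above.
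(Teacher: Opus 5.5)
Your proposal is correct and follows essentially the same route as the paper: rewrite the hypothesis via $\ell_{h_0}(h_0e^{-s})=\log(e+s)$ and bound $(1+a)e^{-a^2/4}$ from below on its decreasing branch by $c\,L(s)^{1/2-\beta}/(e+s)$. You then integrate to get $\mathfrak P_\sigma(S)\ge c[\log(e+S)]^{3/2-\beta}-C$ uniformly in $\sigma$ and invoke \cref{thm:petro-integral}. Like the paper, you read ``sufficiently small $h$'' as a threshold uniform in $\sigma$, which the argument genuinely requires.
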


\begin{proof}
Put $L(s)=\log(e+s)$.  Since $\ell_{h_0}(h_0e^{-s})=L(s)$, \eqref{eq:petro-secondary} gives, after changing the additive constant,
\[
 \frac{a_\sigma(s)^2}{4}
 \le L(s)+\beta\log(e+L(s))+C.
\]
For large $s$, either $a_\sigma(s)$ stays in a bounded interval, in which case $(1+a_\sigma)e^{-a_\sigma^2/4}$ is bounded below by a positive constant, or it lies on the decreasing branch of $a\mapsto(1+a)e^{-a^2/4}$.  In the latter case,
\[
 (1+a_\sigma(s))e^{-a_\sigma(s)^2/4}
 \ge
 c\frac{L(s)^{1/2-\beta}}{e+s}.
\]
Consequently, uniformly in $\sigma$,
\[
 \mathfrak P_\sigma(S)
 \ge c[\log(e+S)]^{3/2-\beta}-C.
\]
Because $\beta<1/2$, this quantity divided by $\log(e+S)$ tends to infinity.  The general integral criterion \cref{thm:petro-integral} therefore gives continuation.
\end{proof}

\begin{remark}\label{rem:petro-leading}
The coefficient $2$ in \eqref{eq:petroNS} is the sharp \emph{leading} Petrovskii coefficient.  Corollary~\ref{cor:petro-secondary} shows that it is not an absolute lower-order wall: a positive $\log\log\log$ correction is still allowed as long as its coefficient in \eqref{eq:petro-secondary} is strictly below $1/2$.  This is consistent with the classical Kolmogorov--Petrovskii theory, where lower iterated logarithms resolve the critical leading coefficient \cite{Abdulla2005}.  The threshold $\beta<1/2$ here is the point at which our quantitative circulation decay remains stronger than every negative power of $|\log r|$, as required to invoke Wei's criterion; it is not claimed to be the full scalar boundary-regularity threshold.
\end{remark}

\begin{corollary}\label{cor:partial-typeI}
The criterion in \cref{thm:petro-NS} contains every partial type-I bound
\[
 \norm{(u^r)_-(t)}_\infty\le\frac{C}{\sqrt{T-t}}.
\]
More generally, it permits terminal inward velocities whose sliding averages are larger than the type-I scale by a factor of order $\sqrt{\log\log(1/h)}$.
\end{corollary}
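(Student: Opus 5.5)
The plan is to check that the sliding hypothesis \eqref{eq:petroNS} of \cref{thm:petro-NS} holds in each case, uniformly in $\sigma\in(T-h_0,T)$ and $0<h<\min\{h_0,\sigma\}$, after shrinking $h_0$ if necessary. The type-I case reduces to an elementary integral inequality. The generalization is then essentially a rereading of \eqref{eq:petroNS}.

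\textbf{Type-I case.} Suppose $m(t)\le C(T-t)^{-1/2}$. For $\sigma<T$ and $h<\sigma$, put $\tau=T-\sigma\ge0$. Then
\[
 D_\sigma(h)\le C\int_{\sigma-h}^{\sigma}\frac{\dd t}{\sqrt{T-t}}
 =2C\bigl(\sqrt{\tau+h}-\sqrt{\tau}\bigr)\le 2C\sqrt h .
\]
The key point is that the subadditivity of $\sqrt{\cdot}$ makes this bound uniform in $\sigma$: the worst window is the terminal one, $\tau=0$, and sliding the window back only helps. It then suffices to have $2C\sqrt h\le 2\sqrt{\nu h\,\ell_{h_0}(h)}$, which reads $C^2\le\nu\,\ell_{h_0}(h)$. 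Since $\ell_{h_0}(h)\ge\log\log(e^e)=1$ always, this holds outright when $C^2\le\nu$.

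For general $C$, use that $\ell_{h_0}(h)\to\infty$ as $h\downarrow0$. Choose $h_1$ with $\nu\,\ell_{h_0}(h)\ge C^2$ for $h<h_1$, and replace $h_0$ by $h_1$, keeping $h_1<T/2$. The problem is the normalization: $\ell_{h_1}(h)$ evaluated at $h$ near $h_1$ is only about $1$. I therefore expect the main (minor) obstacle to be compatibility of the constant $C$ with the window normalization. To resolve it, I would first try keeping the reference scale $h_0$ inside $\ell$ fixed and shrinking only the range of $\sigma$ and $h$. If \cref{thm:petro-NS} is read strictly with the same $h_0$ in both roles, I would instead invoke \cref{cor:petro-secondary} with $\beta=0$ and $C_0=C^2/\nu$: its hypothesis requires the bound only for sufficiently small $h$ and allows an additive constant, so
\[
 \frac{D_\sigma(h)^2}{4\nu h}\le\frac{C^2}{\nu}\le \ell_{h_0}(h)+C_0
\]
holds for every $h$, with no smallness needed, and continuation follows.

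\textbf{Generalization.} Consider any $m$ whose sliding window averages satisfy
\[
 \frac1h\int_{\sigma-h}^\sigma m\le \frac{2\sqrt{\nu\,\ell_{h_0}(h)}}{\sqrt h},
\]
that is, exceed the type-I scale $h^{-1/2}$ by a factor $\sqrt{\ell_{h_0}(h)}\sim\sqrt{\log\log(1/h)}$. For such $m$, \eqref{eq:petroNS} holds verbatim, so the second statement is a restatement of \cref{thm:petro-NS}.

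As a concrete instance showing the class is genuinely larger, I would take
\[
 m(t)=K(T-t)^{-1/2}\sqrt{\log\log\bigl(e^e/(T-t)\bigr)}
\]
with small $K$. Its terminal integral is $\approx 2K\sqrt{h\log\log(1/h)}$, and the same subadditivity argument, now using monotonicity of $\ell$, keeps sliding windows no worse than terminal ones up to constants. Choosing $K$ small gives \eqref{eq:petroNS}, or the secondary form \eqref{eq:petro-secondary}.
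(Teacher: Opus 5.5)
Your proposal is correct and follows the paper's argument: you bound $D_\sigma(h)\le 2C(\sqrt{\tau+h}-\sqrt{\tau})\le 2C\sqrt h$ uniformly in the sliding endpoint $\sigma$, then dominate by the wall $2\sqrt{\nu h\,\ell_{h_0}(h)}$ using $\ell_{h_0}(h)\to\infty$, and you read the second claim directly off \eqref{eq:petroNS}. You also rightly flag that for large $C$ and $h$ comparable to $h_0$ (where $\ell_{h_0}(h)\approx 1$) the bound is not literally below the wall, which the paper passes over with ``eventually dominated'' on ``sufficiently small sliding windows''; your fallback through \cref{cor:petro-secondary} with $\beta=0$ and $C_0=C^2/\nu$ closes that point rigorously.
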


\begin{proof}
For a partial type-I bound and any window ending at $\sigma<T$,
\[
 \int_{\sigma-h}^{\sigma}\frac{C}{\sqrt{T-t}}\,\dd t
 \le2C\sqrt h.
\]
Since $\sqrt{\ell_{h_0}(h)}\to\infty$ as $h\downarrow0$, the right side is eventually dominated by the Petrovskii wall in \eqref{eq:petroNS}.  Away from the terminal time the velocity is bounded by smoothness, so the same conclusion holds uniformly on all sufficiently small sliding windows.
\end{proof}

\section{Source-to-compression response}

The next identity isolates the first infinitesimal leg in the feedback chain $F\to G\to U$.  It is independent of the continuation criteria above.

Let
\[
 h=F^2,
 \qquad
 \mathcal A=-\lapfive.
\]
At a fixed time, freeze $F$ and let the $G$-field receive only the source increment
\begin{equation}\label{eq:dGsrc}
 \dot G_{\rm src}=\partial_z h.
\end{equation}
Under the Hodge map \eqref{eq:Hodge}--\eqref{eq:U-Hodge}, write
\begin{equation}\label{eq:phi-src}
 \phi_{\rm src}=\mathcal A^{-1}\partial_z h,
 \qquad
 \dot U_{\rm src}=-\partial_z\phi_{\rm src}.
\end{equation}

\begin{theorem}\label{thm:weighted-antisource}
For $\alpha\ge1$, define the power-weighted compressive functional
\[
 \mathcal K_{2,\alpha}(F,U)
 =-\int UF^2r^\alpha\,\dd r\,\dd z.
\]
For every smooth rapidly decaying axis-compatible profile $F$, the pure source increment \eqref{eq:dGsrc}--\eqref{eq:phi-src} satisfies
\begin{equation}\label{eq:weighted-source-response}
 \int \dot U_{\rm src}F^2r^\alpha\,\dd r\,\dd z
 =\mathcal Q_\alpha[\phi_{\rm src}],
\end{equation}
where
\begin{equation}\label{eq:Qalpha-source}
 \mathcal Q_1[\phi]
 =\int|\nabla\phi|^2r\,\dd r\,\dd z
 +\int_\R|\phi(0,z)|^2\,\dd z,
\end{equation}
and, for $\alpha>1$,
\begin{equation}\label{eq:Qalpha-source2}
 \mathcal Q_\alpha[\phi]
 =\int|\nabla\phi|^2r^\alpha\,\dd r\,\dd z
 +\frac{(3-\alpha)(\alpha-1)}2
  \int \phi^2r^{\alpha-2}\,\dd r\,\dd z.
\end{equation}
Consequently
\[
 \mathcal Q_\alpha[\phi]\ge0
 \qquad\text{for }1\le\alpha\le5.
\]
More quantitatively, for $1<\alpha\le3$,
\[
 \mathcal Q_\alpha[\phi]\ge
 \int|\nabla\phi|^2r^\alpha\,\dd r\,\dd z,
\]
while for $3<\alpha<5$,
\begin{equation}\label{eq:source-gap-alpha}
 \mathcal Q_\alpha[\phi]\ge
 \frac{5-\alpha}{\alpha-1}
 \int|\nabla\phi|^2r^\alpha\,\dd r\,\dd z.
\end{equation}
At $\alpha=5$ the form is nonnegative but has no positive uniform full-gradient gap.  Hence, throughout $1\le\alpha\le5$,
\[
 \left.\frac{\dd}{\dd\varepsilon}\right|_{\varepsilon=0}
 \mathcal K_{2,\alpha}(F,U+\varepsilon\dot U_{\rm src})\le0.
\]
\end{theorem}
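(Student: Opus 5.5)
The plan is to compute the left side of \eqref{eq:weighted-source-response} by two integrations by parts: first in $z$ to bring the operator $\mathcal A$ onto $\phi_{\rm src}$, then in $r$ to produce the weighted quadratic form. The sign statements then follow from a one-dimensional radial Hardy inequality. Write $\phi=\phi_{\rm src}$, so that $\mathcal A\phi=\partial_z h$ and $\dot U_{\rm src}=-\partial_z\phi$.

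\emph{Step 1 (move the source onto $\phi$).} Integrate by parts in $z$, which produces no boundary terms by decay:
\[
 \int\dot U_{\rm src}\,h\,r^\alpha\,\dd r\,\dd z
 =\int\phi\,\partial_z h\,r^\alpha\,\dd r\,\dd z
 =\int\phi\,\mathcal A\phi\,r^\alpha\,\dd r\,\dd z .
\]
Since $\mathcal A=-(\partial_{rr}+\tfrac3r\partial_r+\partial_{zz})$, the $\partial_{zz}$ part contributes $\int\phi_z^2r^\alpha$ after a further $z$-integration by parts.

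\emph{Step 2 (radial computation with the axis boundary term).} Integrating $-\phi\phi_{rr}r^\alpha$ by parts gives
\[
 \int\phi_r^2r^\alpha+\alpha\int\phi\phi_rr^{\alpha-1}.
\]
The drift term contributes $-3\int\phi\phi_rr^{\alpha-1}$. Together the radial part is
\[
 \int\phi_r^2r^\alpha+\frac{\alpha-3}{2}\int(\phi^2)_rr^{\alpha-1}\,\dd r .
\]
One more integration by parts gives
\[
 \frac{\alpha-3}{2}\Bigl[\phi^2r^{\alpha-1}\Bigr]_0^\infty-\frac{(\alpha-3)(\alpha-1)}{2}\int\phi^2r^{\alpha-2}.
\]
For $\alpha>1$ the axis term vanishes, because $\phi$ is smooth and bounded at $r=0$, and this yields \eqref{eq:Qalpha-source2}. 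For $\alpha=1$ the potential term drops, and the boundary term equals $\tfrac{-2}{2}\bigl(0-\phi(0,z)^2\bigr)=\phi(0,z)^2$, which is exactly the axis trace in \eqref{eq:Qalpha-source}.

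Here I must check that $\phi=\mathcal A^{-1}\partial_zh$ decays well enough that the terms at $r,|z|\to\infty$ vanish up to $\alpha=5$. Viewing $\mathcal A$ as the Laplacian on $\R^5$ with radially symmetric data in the transverse variables, $\phi$ is the $\R^5$ Newton potential of a $z$-derivative of a rapidly decaying function, so $\phi=O(|x|^{-4})$ and $\nabla\phi=O(|x|^{-5})$. With the weight $r^\alpha$, $\alpha\le5$, all boundary and bulk integrals then converge. This decay check is where I would be careful; a truncation argument handles it if needed.

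\emph{Step 3 (signs).} For $1\le\alpha\le3$ the potential coefficient $(3-\alpha)(\alpha-1)/2$ is nonnegative, and for $\alpha=1$ the trace term is nonnegative. Hence $\mathcal Q_\alpha\ge\int|\nabla\phi|^2r^\alpha$ for $1<\alpha\le3$. For $3<\alpha\le5$ I will use the weighted Hardy inequality
\[
 \int_0^\infty\phi^2r^{\alpha-2}\,\dd r\le\frac4{(\alpha-1)^2}\int_0^\infty\phi_r^2r^\alpha\,\dd r,
\]
valid for $\alpha>1$ and functions vanishing at infinity; I expect this to be the main technical point, namely justifying it for our $\phi$ with the decay from Step 2. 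It gives
\[
 \mathcal Q_\alpha\ge\Bigl(1-\frac{2(\alpha-3)}{\alpha-1}\Bigr)\int|\nabla\phi|^2r^\alpha=\frac{5-\alpha}{\alpha-1}\int|\nabla\phi|^2r^\alpha,
\]
which is \eqref{eq:source-gap-alpha}, and nonnegativity at $\alpha=5$.

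For the absence of a gap at $\alpha=5$, I would use that the Hardy constant is sharp. I would test $\mathcal Q_5$ on truncations of $r^{-2}\chi(z)$, cut off near $r=0$ and $r=\infty$ on logarithmic scales, so that the ratio $\mathcal Q_5/\int|\nabla\phi|^2r^5\to0$. Such profiles are admissible as $\phi$, since one can choose $h$ with $\partial_zh=\mathcal A\phi$ for $\phi$ of the form $\partial_z$ of a decaying function. Finally,
\[
 \left.\frac{\dd}{\dd\varepsilon}\right|_{\varepsilon=0}\mathcal K_{2,\alpha}(F,U+\varepsilon\dot U_{\rm src})=-\int\dot U_{\rm src}F^2r^\alpha\,\dd r\,\dd z=-\mathcal Q_\alpha[\phi]\le0,
\]
since $\mathcal K_{2,\alpha}$ is linear in $U$.
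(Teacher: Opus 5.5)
Your proposal is correct and follows essentially the same route as the paper. It uses $z$-integration by parts to reach $\int\phi\,\mathcal A\phi\,r^\alpha\,dr\,dz$, then radial integration by parts to produce the coefficient $(3-\alpha)(\alpha-1)/2$ and the surviving axis trace at $\alpha=1$; it invokes the sharp radial Hardy inequality for $3<\alpha\le5$, Hardy sharpness for the missing gap at $\alpha=5$, and linearity of $\mathcal K_{2,\alpha}$ in $U$. Your $\R^5$ Newton-potential decay check also supplies a justification the paper leaves implicit.

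Two small cautions about the $\alpha=5$ test functions.

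\emph{Placement of the cutoffs.} Keep the outer radial cutoff fixed and send only the inner one to $0$, or make the $z$-width of $\chi$ exceed the outer cutoff. Otherwise $\int\phi_z^2r^5\,dr\,dz$ grows like the square of the outer radius while $\int\phi_r^2r^5\,dr\,dz$ grows only logarithmically, and the ratio tends to $1$ rather than $0$.

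\emph{Admissibility.} Your admissibility remark yields only a signed $h=\mathcal A\psi$. For the compactly supported $\psi$ your truncations produce, $\int h\,r^3\,dr\,dz=0$, so $h=F^2$ is impossible. This is harmless, because the paper's no-gap assertion concerns the form $\mathcal Q_5$ itself.
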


\begin{proof}
Put $h=F^2$ and $\phi=\phi_{\rm src}$.  Integration in $z$ gives
\[
 \int \dot U_{\rm src}h r^\alpha\,\dd r\,\dd z
 =-\int\phi_z h r^\alpha\,\dd r\,\dd z
 =\int\phi h_z r^\alpha\,\dd r\,\dd z
 =\int\phi\mathcal A\phi\,r^\alpha\,\dd r\,\dd z.
\]
For $\alpha>1$, radial integration by parts yields
\[
 \int\phi\mathcal A\phi\,r^\alpha
 =\int|\nabla\phi|^2r^\alpha
 +\frac{(3-\alpha)(\alpha-1)}2\int\phi^2r^{\alpha-2},
\]
with all integrals over the meridional half-plane.  At $\alpha=1$ the axis boundary survives and gives \eqref{eq:Qalpha-source}.  For $1<\alpha\le3$ the zero-order coefficient is nonnegative.  For $3<\alpha\le5$, the sharp radial Hardy inequality
\[
 \int|\phi_r|^2r^\alpha\,\dd r\,\dd z
 \ge\frac{(\alpha-1)^2}{4}
 \int\phi^2r^{\alpha-2}\,\dd r\,\dd z
\]
gives \eqref{eq:source-gap-alpha} for $\alpha<5$ and nonnegativity at $\alpha=5$.  The absence of a positive full-gradient gap at $\alpha=5$ follows from the sharpness of Hardy's inequality, equivalently from the $G$-diffusion endpoint in \cite{ShahmurovHardy2026}.  Differentiating $\mathcal K_{2,\alpha}$ with $F$ frozen proves the final assertion.
\end{proof}

\begin{corollary}\label{prop:antisource}
At the physical measure $\alpha=1$,
\begin{equation}\label{eq:antisource}
 \int \dot U_{\rm src}F^2\,\dd\mu_3
 =\int |\nabla\phi_{\rm src}|^2\,\dd\mu_3
 +\int_\R|\phi_{\rm src}(0,z)|^2\,\dd z
 \ge0.
\end{equation}
Equivalently, the instantaneous source contribution to
\[
 \mathcal K_2(F,U)=-\int UF^2\,\dd\mu_3
\]
is nonpositive.
\end{corollary}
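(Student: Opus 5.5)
This corollary is the case $\alpha=1$ of \cref{thm:weighted-antisource}, so the plan is to specialize that theorem and convert between the measures. The weight $r^\alpha\,\dd r\,\dd z$ at $\alpha=1$ is exactly $\dd\mu_3=r\,\dd r\,\dd z$. Hence \eqref{eq:weighted-source-response} with the form $\mathcal Q_1$ from \eqref{eq:Qalpha-source} gives the identity in \eqref{eq:antisource} directly. Nonnegativity is then immediate, since both terms of $\mathcal Q_1$ are squares.

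For completeness I would also re-derive the identity in a few lines, because the axis trace is the only delicate point. First, integrate by parts in $z$, using decay, to get $\int\dot U_{\rm src}F^2\,\dd\mu_3=\int\phi\,\partial_z(F^2)\,\dd\mu_3=\int\phi\,\mathcal A\phi\,\dd\mu_3$. Next write $\mathcal A\phi=-(\phi_{rr}+\tfrac3r\phi_r+\phi_{zz})$. Then $r\mathcal A\phi=-(r\phi_r)_r-2\phi_r-r\phi_{zz}$.

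Multiplying by $\phi$ and integrating, the divergence term gives $\int|\phi_r|^2r$ with a vanishing boundary at $r=0$. The term $-2\int\phi\phi_r\,\dd r$ equals $-[\phi^2]_0^\infty=\phi(0,z)^2$. This produces the trace $\int_\R|\phi(0,z)|^2\,\dd z$, while the $z$-part gives $\int|\phi_z|^2r$. This is the main thing to get right: the $\alpha=1$ Hardy coefficient $(3-\alpha)(\alpha-1)/2$ vanishes, and the positive axis contribution appears in its place. Axis-compatibility ($\phi$ smooth, even in $r$) and rapid decay of $\phi=\mathcal A^{-1}\partial_z h$ justify dropping the boundary terms at infinity.

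Finally, differentiate $\mathcal K_2(F,U+\varepsilon\dot U_{\rm src})$ at $\varepsilon=0$ with $F$ frozen. This gives $-\int\dot U_{\rm src}F^2\,\dd\mu_3\le0$, which is the stated equivalence.
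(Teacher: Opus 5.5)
Your proposal is correct and follows the paper's route: the corollary is the $\alpha=1$ case of \cref{thm:weighted-antisource}. Your re-derivation, namely the $z$-integration by parts, then $r\mathcal A\phi=-(r\phi_r)_r-2\phi_r-r\phi_{zz}$ so that $-2\int_0^\infty\phi\phi_r\,\dd r=\phi(0,z)^2$ yields the axis trace, is exactly the computation the paper asserts without writing it out. One small correction: $\phi_{\rm src}=\mathcal A^{-1}\partial_z h$ is not rapidly decaying, since viewed in $\R^5$ it decays only like $|X|^{-4}$, but this polynomial decay is still ample to discard the boundary terms at infinity.
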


\begin{remark}
The weighted theorem does not say that the full nonlinear evolution is anti-compressive.  It says something more precise: the instantaneous source leg $F^2\mapsto\partial_z(F^2)\mapsto G\mapsto U$ is non-amplifying for the compressive $F^2$ work throughout the Hardy-nonnegative range $1\le\alpha\le5$.  Any positive source-to-compression feedback must therefore involve pre-existing $G$ or subsequent transport, diffusion, or Hodge evolution.
\end{remark}

\section{Necessary conditions for singularity}

We now combine the two independent continuation mechanisms.  No concentration-selection, compactness, or profile hypothesis is used.

\begin{theorem}\label{thm:phenotype}
Let $u$ be the smooth axisymmetric solution with $u_0\in H^2(\R^3)$ and $\Gamma_0=ru_0^\theta\in L^\infty$, on its maximal interval $[0,T_*)$, with $T_*<\infty$ a hypothetical first singular time.  Then:
\begin{enumerate}[label=\rm(\roman*)]
\item For every sufficiently late $t_0<T_*$ and every finite $q\ge2$,
\begin{equation}\label{eq:necessary-action}
 \sup_{t_0<t<T_*}\int_{t_0}^t-\overline U_q(s)\,\dd s=\infty.
\end{equation}
More generally, \eqref{eq:necessary-action} holds for every $3/2\le q<2$ for which $Y_q(t_0)\in L^q$.  Thus the singularity must escape an entire continuum of scale-critical signed compression detectors.

\item For every sufficiently small terminal scale $h_0$, the integral Petrovskii condition \eqref{eq:petro-integral-NS} fails on that terminal neighborhood.  In particular, for every such $h_0$ there exist $\sigma\in(T_*-h_0,T_*)$ and $0<h<h_0$ such that
\begin{equation}\label{eq:necessary-petro}
 \int_{\sigma-h}^{\sigma}\norm{(u^r)_-(t)}_\infty\,\dd t
 >2\sqrt{\nu h\ell_{h_0}(h)}.
\end{equation}
More sharply, for every $\beta<1/2$, every $C_0\in\R$, and every sufficiently small $h_0$, some terminal sliding window violates
\begin{equation}\label{eq:necessary-petro-secondary}
 \frac{D_\sigma(h)^2}{4\nu h}
 \le
 \ell_{h_0}(h)
 +\beta\log\!\bigl(e+\ell_{h_0}(h)\bigr)+C_0.
\end{equation}

\item At every preterminal time, the physical Gateaux derivative in the pure instantaneous-source direction satisfies
\[
 \left.\frac{\dd}{\dd\varepsilon}\right|_{\varepsilon=0}
 \mathcal K_2(F,U+\varepsilon\dot U_{\rm src})\le0.
\]
Thus a positive source-to-compression feedback cannot be an instantaneous self-source effect.  The weighted extension in \cref{thm:weighted-antisource} holds whenever the corresponding power-weighted moments are finite, but such moments are not assumed here.
\end{enumerate}
\end{theorem}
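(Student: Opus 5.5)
The theorem assembles three results already proved, so the plan is to argue each part by contraposition from the corresponding continuation statement. The only substantive points are checking that the hypotheses of those statements hold at late times, and that the constants are uniform.

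For (i), fix a finite $q\ge2$ and a late time $t_0<T_*$. The solution is smooth on $[0,T_*)$, so $u(t_0)\in H^2\subset H^1$. The circulation maximum principle gives $\|\Gamma(t_0)\|_\infty\le\|\Gamma_0\|_\infty$. By \cref{prop:automatic-critical-mass}, $Y_q(t_0)\in L^q$ for every finite $q\ge2$. Suppose \eqref{eq:necessary-action} failed at some such $q$, that is, $\mathcal A_q(t_0,T_*)<\infty$. Then \cref{cor:critical-family} would extend $u$ smoothly through $T_*$; the bounded-circulation hypothesis required there for $q<3$ is supplied by $\Gamma_0\in L^\infty$. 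This contradicts maximality of $T_*$. For $3/2\le q<2$ the same argument runs once $Y_q(t_0)\in L^q$ is assumed. This is just \cref{cor:continuum-necessary-action}, which I would cite directly.

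For (ii), take any admissible small $h_0<T_*/2$. If \eqref{eq:petro-integral-NS} held on $(T_*-h_0,T_*)$, then \cref{thm:petro-integral} would give continuation, a contradiction. The two displayed consequences follow in the same way from the sufficient conditions. If every sliding window with $\sigma\in(T_*-h_0,T_*)$ and $0<h<\min\{h_0,\sigma\}$ obeyed the bound $2\sqrt{\nu h\ell_{h_0}(h)}$, then \cref{thm:petro-NS} would apply; hence some window satisfies \eqref{eq:necessary-petro}. Similarly, if for given $\beta<1/2$ and $C_0$ the bound \eqref{eq:necessary-petro-secondary} held on all sufficiently small windows, then \cref{cor:petro-secondary} would apply. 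I expect the main obstacle here to be quantifier bookkeeping: ``sufficiently small $h$'' in \cref{cor:petro-secondary} versus ``every $0<h<h_0$'' in \cref{thm:petro-NS}. I would state the violation as ``for every $h_0$ and every threshold $h_1>0$ there is a window with $h<h_1$ violating the bound''. This follows because the hypothesis of \cref{cor:petro-secondary} only concerns windows of length below some $h_1$, and it implies the stated existence.

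For (iii), fix a preterminal time. There $F$ is smooth and axis-compatible, since $u^\theta=O(r)$ and $u^\theta\in H^2$. We then have $F\in L^4$ in $\dd\mu_3$ with enough integrability for $\phi_{\rm src}=\mathcal A^{-1}\partial_z(F^2)$ to have finite energy $\int|\nabla\phi|^2\dd\mu_3$. Apply \cref{prop:antisource}, the case $\alpha=1$ of \cref{thm:weighted-antisource}. The Gateaux derivative of $\mathcal K_2$ in the direction $\dot U_{\rm src}$ equals $-\int\dot U_{\rm src}F^2\dd\mu_3$, which is $\le0$ by \eqref{eq:antisource}.

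The one genuine gap is that \cref{thm:weighted-antisource} assumes a rapidly decaying profile, while a strong solution only decays in the Sobolev sense. I would close it by truncation. Replace $F$ by $\chi_RF$, apply the identity, and pass to the limit $R\to\infty$ using continuity of $h\mapsto\mathcal A^{-1}\partial_zh$ from $L^{6/5}$-type spaces into $\dot H^1$ in the five-dimensional variables; the axis trace term is handled by the trace inequality. No weighted moments for $\alpha>1$ are needed, in line with the closing remark of (iii).
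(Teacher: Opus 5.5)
Your proposal is correct and follows the paper's proof exactly: (i) is \cref{cor:continuum-necessary-action}, (ii) is the contrapositive of \cref{thm:petro-integral}, \cref{thm:petro-NS} and \cref{cor:petro-secondary}, and (iii) is \cref{prop:antisource} at each smooth preterminal time. The paper handles the decay issue in (iii) only through the blanket truncation remark of Section~2, so your extra truncation step goes beyond it; in that step, take the limit in $L^2(\dd\mu_3)$ rather than through unweighted five-dimensional $\dot H^1$ continuity. Concretely, work in $\R^5$ with the $A_2$ weight $|x'|^{-2}$, where Riesz transforms are bounded and $F\in L^4(\R^3)$ gives $F^2\in L^2(\dd\mu_3)$; the unweighted $\dot H^1(\R^5)$ norm uses $r^3\,dr\,dz$ and does not control the $\alpha=1$ terms near the axis. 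Also, only the sign of $\int\dot U_{\rm src}F^2\,\dd\mu_3$ has to survive the limit, so you can drop the axis trace term instead of invoking a trace inequality.
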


\begin{proof}
Assertion (i) is \cref{cor:continuum-necessary-action}.  If the integral Petrovskii condition held on some terminal neighborhood, \cref{thm:petro-integral} would continue the solution.  If the leading wall \eqref{eq:necessary-petro} were never crossed on some sufficiently small terminal neighborhood, \cref{thm:petro-NS} would apply.  If \eqref{eq:necessary-petro-secondary} held uniformly for some $\beta<1/2$, $C_0$, and $h_0$, then \cref{cor:petro-secondary} would give continuation.  The contrapositives prove (ii).  Assertion (iii) is exactly the physical case \cref{prop:antisource} at each smooth preterminal time.
\end{proof}

Thus any positive source-to-compression return near a singular endpoint must use temporal evolution beyond the instantaneous source leg---for example pre-existing meridional vorticity, transport, diffusion, or a later phase of the Hodge response.

\begin{corollary}\label{cor:conditional-global}
Let $u$ be the maximal smooth axisymmetric solution generated by smooth $H^2$ axisymmetric data with bounded circulation.  Suppose that at every finite candidate terminal time $T$, at least one of the following holds on a terminal neighborhood:
\begin{enumerate}[label=\rm(\alph*)]
\item for some finite $q\ge2$ and some $0<t_0<T$, the critical signed action $\mathcal A_q(t_0,T)$ in \eqref{eq:critical-action-hyp} is finite;
\item the integral Petrovskii condition \eqref{eq:petro-integral-NS} holds; in particular it is enough that either the leading wall \eqref{eq:petroNS} or the lower-order condition \eqref{eq:petro-secondary} hold uniformly on sliding windows.
\end{enumerate}
Then the solution is global and smooth.
\end{corollary}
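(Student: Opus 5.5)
The argument is by contradiction. Let $T_*$ be the maximal existence time of the smooth solution and suppose $T_*<\infty$. By construction $T_*$ is a first singular time in the sense of \cref{thm:phenotype}. By hypothesis, at the candidate terminal time $T=T_*$ at least one of (a) or (b) holds on some terminal neighborhood. We treat the two alternatives separately; in each case we produce smooth continuation through $T_*$, which contradicts maximality.

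\emph{Case (a).} There are a finite $q\ge2$ and a time $t_0<T_*$ with $\mathcal A_q(t_0,T_*)<\infty$. Bounded initial circulation together with the circulation maximum principle gives $\Gamma(t)\in L^\infty$ at every preterminal time. \cref{prop:automatic-critical-mass} therefore gives $Y_q(t_0)\in L^q$ for every $2\le q<\infty$. When $q\ge3$ no circulation assumption is needed in \cref{cor:critical-family}; when $2\le q<3$ the bounded initial circulation hypothesis of that corollary holds by assumption. So \cref{cor:critical-family} continues $u$ through $T_*$.

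One small point needs care. Statement (i) of \cref{thm:phenotype} is phrased for ``sufficiently late'' $t_0$, whereas (a) allows an arbitrary $t_0\in(0,T_*)$. This causes no difficulty, because \cref{cor:critical-family} accepts any $t_0$ at which the critical mass is finite.

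\emph{Case (b).} The integral Petrovskii condition \eqref{eq:petro-integral-NS} holds on a terminal neighborhood. \cref{thm:petro-integral} then applies directly and gives continuation. If instead the leading wall \eqref{eq:petroNS} holds uniformly on sliding windows, we invoke \cref{thm:petro-NS}. If the lower-order condition \eqref{eq:petro-secondary} holds, we invoke \cref{cor:petro-secondary}. Each of these also follows by reduction to \eqref{eq:petro-integral-NS}, via the lower bounds on $\mathfrak P_\sigma$ established in their proofs.

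The one point requiring attention is the smallness of $h_0$. \cref{thm:petro-integral} asks for $h_0$ ``sufficiently small'', while the hypothesis supplies only some terminal neighborhood. I would handle this by observing that \eqref{eq:petro-integral-NS} persists when $h_0$ is shrunk, or when $\sigma$ is restricted to a smaller terminal interval. Changing $h_0$ only shifts $s$ by a constant and alters the bounded part of the exponent, as noted after \eqref{eq:scalar-general-est}. I expect this bookkeeping to be the only genuine obstacle; everything else is a direct citation.

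In both cases continuation through $T_*$ contradicts maximality, so $T_*=\infty$. Smoothness on $[0,\infty)$ is inherited from the maximal smooth solution.
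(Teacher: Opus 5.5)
Your argument is correct and is essentially the paper's: the corollary is the contrapositive of \cref{thm:phenotype}, so at a hypothetical finite maximal time each alternative yields continuation, contradicting maximality. Alternative (a) goes through \cref{cor:critical-family} together with \cref{prop:automatic-critical-mass}, and alternative (b) through \cref{thm:petro-integral}, \cref{thm:petro-NS} or \cref{cor:petro-secondary}. Your extra bookkeeping is sound and fills in details the paper leaves implicit: the action criterion accepts an arbitrary $t_0$, and shrinking $h_0$ only shifts $s$ and subtracts the bounded term $\mathfrak P_\sigma(s_1)$.
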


\section{Comparison with known criteria}

The weighted energies behind \cref{thm:two-parameter-action} belong to a classical line of axisymmetric analysis.  Chen--Fang--Zhang prove Prodi--Serrin criteria for $r^d u^\theta$, $0\le d<1$, and by interpolation with bounded circulation for $u^\theta/r^a$, $0<a\le1$ \cite{ChenFangZhang2017}.  Renc\l awowicz--Zaj\k aczkowski use the weighted circulation variable $u_\alpha=\Gamma/r^\alpha$ and select $\alpha s=3$ in their energy method \cite{RenclawowiczZajaczkowski2019}.  In our notation that relation is exactly
\[
 d=1-\frac3s,
\]
the $q=s$ critical diagonal in \eqref{eq:critical-Yq}.  Thus the critical weighted energies themselves are not claimed as new.  The new organization is the exact signed integrating factor and the resulting continuum of scale-invariant compression actions: instead of estimating $U=u^r/r$ by an unsigned norm, the criterion keeps the average of $U$ against the active critical swirl density and permits expansion to cancel compression.

Classical one-component criteria such as Chae--Lee \cite{ChaeLee2002} control a radial velocity or vorticity component through unsigned scale-critical space--time norms.  The present action is different in form: it retains only the signed average of $U$ against the evolving critical swirl density.  Pointwise, $-\overline U_q\le\|U_-\|_{L^\infty}$ whenever $Z_q>0$, but no general dominance relation with the broader component criteria is asserted.

The critical family also clarifies the endpoints.  At $q=3/2$, $Y_q=F=u^\theta/r$ and one recovers the strongest quotient detector of the earlier formulation.  At $q=3$, $Y_q=u^\theta$ and the critical mass is automatically finite from the ordinary Sobolev class; no bounded-circulation interpolation is needed for the continuation theorem.  As $q\to\infty$, $Y_q$ approaches the circulation $\Gamma=ru^\theta$, whose evolution has no radial stretching term.  The universal coefficient
\[
 q(1-d_q)=3
\]
shows that all finite points on the critical diagonal are measured by the same dimensionless signed action.

A related power-weight classification of the same swirl equation is developed in the author's preprint \cite{ShahmurovHardy2026}; there the emphasis is the sharp Hardy/coercivity phase diagram.  The present paper instead uses the exact evolution identity to obtain continuation from signed compression and combines it with an independent sliding-axis mechanism.

The Petrovskii criterion measures a different aspect of inward motion.  It is purely kinematic and one-sided: it controls the cumulative displacement generated by $(u^r)_-$ and uses the scalar maximum principle for $\Gamma$.  Zhang \cite{Zhang2026} proves regularity under the partial type-I hypothesis $u^r\gtrsim-(T-t)^{-1/2}$ by producing a H\"older circulation modulus.  The sliding condition here produces a weaker, non-H\"older modulus that nevertheless beats every negative power of $|\log r|$, sufficient by Wei's theorem.  It therefore permits a logarithmically larger averaged inward velocity.  It should also be compared with the pointwise radial criteria of Pan and Zhang \cite{Pan2017,ZujinZhang2018} and weighted radial criteria such as \cite{RenclawowiczZajaczkowski2019}.

The coefficient $2$ in \cref{thm:petro-NS} is the sharp leading Petrovskii coefficient, not asserted to be a necessary Navier--Stokes blowup constant.  The general integral criterion retains lower-order information: \cref{cor:petro-secondary} allows a positive $\log\log\log$ correction with coefficient $\beta<1/2$ while still producing a circulation modulus stronger than Wei's threshold.  The pulse construction in \cref{prop:terminal-no-go} shows that the sliding requirement is structural for this comparison route; a terminal-only cumulative bound does not control thin preterminal boundary layers.

Finally, \cref{thm:weighted-antisource,prop:antisource} explain why the instantaneous swirl source alone cannot supply the required compressive return.  The source $\partial_z(F^2)$ creates $G$, but its immediate Hodge response is non-amplifying for the power-weighted compressive $F^2$ work all the way to the quadratic Hardy endpoint $\alpha=5$.  If the swirl source participates in a positive source-to-compression feedback, temporal evolution must intervene between source creation and compressive return.  Together with \cref{thm:phenotype}, this leaves a hypothetical singularity with two simultaneous requirements: divergence of every automatically available critical signed action and recurrent escape from the sliding Petrovskii regime.

\section*{Conflict of interest}
On behalf of all authors, the corresponding author states that there is no conflict of interest.


\begin{thebibliography}{99}

\bibitem{Abdulla2005}
U.~G. Abdulla,
\newblock Multidimensional Kolmogorov--Petrovsky test for the boundary regularity and irregularity of solutions to the heat equation,
\newblock \emph{Boundary Value Problems} 2005 (2005), 181--199,
\newblock \url{https://doi.org/10.1155/BVP.2005.181}.

\bibitem{ChaeLee2002}
D.~Chae and J.~Lee,
\newblock On the regularity of the axisymmetric solutions of the Navier--Stokes equations,
\newblock \emph{Math. Z.} 239 (2002), 645--671,
\newblock \url{https://doi.org/10.1007/s002090100317}.

\bibitem{ChenFangZhang2017}
H.~Chen, D.~Fang and T.~Zhang,
\newblock Regularity of 3D axisymmetric Navier--Stokes equations,
\newblock \emph{Discrete Contin. Dyn. Syst.} 37 (2017), 1923--1939,
\newblock \url{https://doi.org/10.3934/dcds.2017081}.

\bibitem{ChenStrainTsaiYau2008}
C.-C.~Chen, R.~M.~Strain, T.-P.~Tsai and H.-T.~Yau,
\newblock Lower bound on the blow-up rate of the axisymmetric Navier--Stokes equations,
\newblock \emph{Int. Math. Res. Not. IMRN} 2008 (2008), Art. ID rnn016,
\newblock \url{https://doi.org/10.1093/imrn/rnn016}.

\bibitem{ChenStrainTsaiYau2009}
C.-C.~Chen, R.~M.~Strain, T.-P.~Tsai and H.-T.~Yau,
\newblock Lower bounds on the blow-up rate of the axisymmetric Navier--Stokes equations II,
\newblock \emph{Comm. Partial Differential Equations} 34 (2009), 203--232,
\newblock \url{https://doi.org/10.1080/03605300902793956}.

\bibitem{Ladyzhenskaya1968}
O.~A. Ladyzhenskaya,
\newblock \emph{The Mathematical Theory of Viscous Incompressible Flow},
\newblock 2nd ed., Gordon and Breach, New York, 1969.

\bibitem{LeiZhang2017}
Z.~Lei and Q.~S. Zhang,
\newblock Criticality of the axially symmetric Navier--Stokes equations,
\newblock \emph{Pacific J. Math.} 289 (2017), 169--187,
\newblock \url{https://doi.org/10.2140/pjm.2017.289.169}.

\bibitem{Muckenhoupt1972}
B.~Muckenhoupt,
\newblock Hardy's inequality with weights,
\newblock \emph{Studia Math.} 44 (1972), 31--38,
\newblock \url{https://doi.org/10.4064/sm-44-1-31-38}.

\bibitem{OzanskiPalasek2023}
W.~S. O\.{z}a\'nski and S.~Palasek,
\newblock Quantitative control of solutions to the axisymmetric Navier--Stokes equations in terms of the weak $L^3$ norm,
\newblock \emph{Ann. PDE} 9 (2023), Article 15,
\newblock \url{https://doi.org/10.1007/s40818-023-00156-7}.

\bibitem{Pan2017}
X.~Pan,
\newblock A regularity condition of 3D axisymmetric Navier--Stokes equations,
\newblock \emph{Acta Appl. Math.} 150 (2017), 103--109,
\newblock \url{https://doi.org/10.1007/s10440-017-0096-3}.

\bibitem{RenclawowiczZajaczkowski2019}
J.~Renc\l awowicz and W.~M. Zaj\k aczkowski,
\newblock On some regularity criteria for axisymmetric Navier--Stokes equations,
\newblock \emph{J. Math. Fluid Mech.} 21 (2019), Article 51,
\newblock \url{https://doi.org/10.1007/s00021-019-0447-0}.

\bibitem{UkhovskiiYudovich1968}
M.~R. Ukhovskii and V.~I. Yudovich,
\newblock Axially symmetric flows of ideal and viscous fluids filling the whole space,
\newblock \emph{J. Appl. Math. Mech.} 32 (1968), 52--61,
\newblock \url{https://doi.org/10.1016/0021-8928(68)90147-0}.

\bibitem{WangHuangWeiYu2024}
Y.~Wang, Y.~Huang, W.~Wei and H.~Yu,
\newblock Regularity criteria of the axisymmetric Navier--Stokes equations and Hardy--Sobolev inequality in mixed Lorentz spaces,
\newblock \emph{J. Math. Anal. Appl.} 533 (2024), Art. 128050,
\newblock \url{https://doi.org/10.1016/j.jmaa.2023.128050}.

\bibitem{Wei2016}
D.~Wei,
\newblock Regularity criterion to the axially symmetric Navier--Stokes equations,
\newblock \emph{J. Math. Anal. Appl.} 435 (2016), 402--413,
\newblock \url{https://doi.org/10.1016/j.jmaa.2015.09.088}.

\bibitem{Zhang2016}
Z.~Zhang,
\newblock Remarks on regularity criteria for the Navier--Stokes equations with axisymmetric data,
\newblock \emph{Ann. Polon. Math.} 117 (2016), 181--196,
\newblock \url{https://doi.org/10.4064/ap3856-3-2016}.

\bibitem{ZujinZhang2018}
Z.~Zhang,
\newblock A pointwise regularity criterion for axisymmetric Navier--Stokes system,
\newblock \emph{J. Math. Anal. Appl.} 461 (2018), 1--6,
\newblock \url{https://doi.org/10.1016/j.jmaa.2017.12.069}.

\bibitem{ShahmurovHardy2026}
R.~Shahmurov,
\newblock Hardy criticality in axisymmetric swirl,
\newblock arXiv:2605.01875 (2026).

\bibitem{Zhang2026}
Q.~S. Zhang,
\newblock On partial type I solutions to the axially symmetric Navier--Stokes equations,
\newblock arXiv:2604.07785 (2026).

\end{thebibliography}
\end{document}